\newtheorem{theorem}{Theorem}
\newtheorem{proposition}{Proposition}
\newtheorem{corollary}{Corollary}
\newtheorem{lemma}{Lemma}
\newtheorem{claim}{Claim}
\newcommand{\hunt}{h}
\newcommand{\Oh}{O}
\def\cqedsymbol{\ifmmode$\lrcorner$\else{\unskip\nobreak\hfil
\penalty50\hskip1em\null\nobreak\hfil$\lrcorner$
\parfillskip=0pt\finalhyphendemerits=0\endgraf}\fi} 
\newcommand{\cqed}{\renewcommand{\qed}{\cqedsymbol}}
\newcommand{\pw}{{\mathbf{pw}}}
\begin{document}

\title{How to Hunt an Invisible Rabbit on a Graph
\footnote{The research leading to these results has received funding from the European Research Council under the European Union's Seventh Framework Programme (FP/2007-2013) / ERC Grant Agreement n.~267959. M.~Pilipczuk is currently holding a post-doc position at Warsaw Center of Mathematics and Computer Science; However, this work was done when M.~Pilipczuk was affiliated with the University of Bergen.}
 }

\author{
Tatjana  V. Abramovskaya\thanks{Department of Operations Research, St.-Petersburg State University,  Universitetsky prospekt 28, Stary Peterhof, 198504, St-Petersburg, Russia. E-mail: {\tt tanya.abramovskaya@gmail.com}}
\and
Fedor V. Fomin\thanks{Department of Informatics, University of Bergen, PB 7803, 5020 Bergen, Norway. E-mails: {\tt{\{fedor.fomin,petr.golovach\}@ii.uib.no}}}
\addtocounter{footnote}{-1}
\and 
Petr A. Golovach\footnotemark
\and
Micha{\l} Pilipczuk\thanks{Institute of Informatics, University of Warsaw, ul. Banacha 2, 02-097 Warsaw, Poland. E-mail: {\tt{michal.pilipczuk@mimuw.edu.pl}}}
}

\date{}

\maketitle

\begin{abstract}
We investigate Hunters \& Rabbit game,  where a set of hunters tries to catch an invisible  rabbit that slides along the edges of a graph. 
We show that the minimum number of hunters required to win on an $(n\times m)$-grid is 
  $\lfloor \frac{\min\{n,m\}}{2}\rfloor+1$.  
We also show that the extremal value of this number on $n$-vertex trees is between $\Omega(\log n/\log \log n)$ and $\Oh(\log n)$.
\end{abstract}

\section{Introduction}

Our work originated from the following game puzzle.
 Hunter wants to shoot Rabbit who is hiding behind one of the three bushes growing in a row. Hunter does not see Rabbit,  so he select one of the bushes and shoots at it. 
 If Rabbit is behind the selected bush, then Hunter wins. Otherwise Rabbit, scared by the shot, jumps to one of the adjacent bushes. 
 As Rabbit is infinitely fast, Hunter sees neither Rabbit's old nor new bush and has to select where to shoot again. 
 Can Hunter always win in this game? 
 
 Of course, the answer is \emph{yes}: Hunter has to shoot twice at the middle  bush.
 If he misses the first time, it means that Rabbit was hiding either behind  
  the leftmost or the rightmost bush. In both cases, the only adjacent bush where Rabbit can jump after the first shot is the middle one, thus the second shot at the middle bush finishes the game. 
 A natural question is what happens if we have four bushes, and more generally, $n\geq 3$ bushes growing in a row? After a bit of thinking, the answer here is \emph{yes} as well. This time Hunter wins by  shooting consequently at the bushes $2,\ldots, n-1$ when $n$ is odd and at the bushes $2,\ldots, n$ when $n$ is even, and then repeating the same sequence of shots again.

In a slightly different situation, when   bushes grow around a circle, say we have three bushes and Rabbit can jump from any of them to any of them, then Hunter cannot guarantee the success anymore. In this situation we need the second hunter and this brings us to the following setting. We consider  Hunters \& Rabbit game with  
  two players, Hunter and Rabbit, playing  on an undirected graph.  Hunter player has a team of hunters who attempt to shoot the rabbit. At the beginning of the game, Rabbit  player selects a vertex and occupies it. Then the players take turns starting with Hunter player. At
every round of the game each of the hunters selects some vertex of the graph and the hunters shoot simultaneously at their respective aims. If the rabbit is not in a vertex that is hit by a shot, it jumps to an adjacent vertex. The rabbit is invisible to the hunters, but since we are interested in the guaranteed success of the hunters, we can assume that rabbit has a complete knowledge about all shots that the hunters plan.
Hunter player wins if at some step of the game he succeeds to shoot the
rabbit, and Rabbit player wins if the rabbit can avoid this situations forever.
For a given graph $G$,
we are interested in the minimum number of hunters sufficient to win in the Hunters \& Rabbit game on $G$, for any strategy chosen by the rabbit player. We call this parameter the \emph{hunter number} of a graph, and denote it by $h(G)$.

\paragraph{Related work} 
Britnell and Wildon studied the case with one hunter in~\cite{BritnellW13}. They characterized the graphs for which one hunter (the prince in their terminology) can find the rabbit (the princess).  

Hunters \& Rabbit game is  closely related to several pursuit-evasion and search games on graphs, see \cite{FominT08} for further references.  In pursuit-evasion games a team of cops is trying to catch a robber located on the vertices of the graph. In cops-robbers terminology, Hunters \& Rabbit is the Cops \& Robber game, where the set of cops on helicopters (i.e. allowed to jump to any vertex) is trying to catch an invisible robber. The robber moves only to adjacent vertices and is forced to move every time the cops are in the air.
 
In particular, the classical Cops \& Robbers games introduced 
independently by Winkler and Nowakowski~\cite{NowakowskiW83} and by Quilliot~\cite{Quilliot85} (see also  the book by Bonato and Nowakowski~\cite{BonatoN11} for the detailed introduction to the field), is the game where robber is visible, and both players, Cops and Robber, can move their  men only to adjacent vertices. The variant of the game where the robber is invisible introduced by To{\v{s}}i{\'c}~\cite{Tosic85} and the variant where the cops use predefined paths as theirs search moves was introduced by Brass et al.~\cite{Brass09}.
Another related search game, node search, was introduced by Kirousis and Papadimitriou
in~\cite{KirousisP85-In,KirousisP86-Se}. Here cops can fly, i.e. move to any vertex they wish, the robber is invisible and very fast, i.e. can go to any vertex connected to his current location by a path containing no cops. 
Thus, Hunters \& Rabbit can be seen as a variant of To{\v{s}}i{\'c}'s game where cops have more power or as a variant of Kirousis-Papadimitriou's game, where the robber is more restricted. One more significant difference with mentioned games is that  in most versions of Cops \& Robbers games the robber is not forced to move at every step of the game, while in our setting the rabbit cannot stay at the same vertex for to consecutive steps.

A randomized game called Hunter vs. Rabbit was considered by Adler et al.~\cite{AdlerRSSV03}; here, the hunter is allowed to move only along edges of the graph while there are no constrains on rabbit's moves.

\paragraph{Our results and organization of the paper} 
The remaining part of this paper is organized as follows. We give basic definitions and preliminary results in Section~\ref{sec:defs}. In Section~\ref{sec:grid}, we prove our first main result, namely that for an $(n\times m)$-grid $G$, it holds that $h(G)=\lfloor \frac{\min\{n,m\}}{2}\rfloor+1$. This result is based on a new isoperimetric theorem that we find interesting on its own. In Section~\ref{sec:tree}, we provide bounds on the hunting number of trees, which is the second contribution of the paper. We show that the hunting number of an $n$-vertex tree is always $\Oh(\log n)$, but there are trees where it can be as large as $\Omega(\log n/\log \log n)$. We conclude with open problems in Section~\ref{sec:conclusions}.

\section{Basic definitions and preliminaries}\label{sec:defs}

We consider finite undirected graphs without loops or multiple
edges. The vertex set of a (directed) graph $G$ is denoted by $V(G)$, 
the edge set is denoted by $E(G)$.
For a set of vertices $S\subseteq V(G)$,
$G[S]$ denotes the subgraph of $G$ induced by $S$, and by $G-S$ we denote the graph obtained from $G$ by the removal of all the vertices of $S$, i.e., the subgraph of $G$ induced by $V(G)\setminus S$. 
Let $G$ be an undirected graph. 
For a vertex $v$, we denote by $N_G(v)$ its
\emph{(open) neighborhood}, that is, the set of vertices that are
adjacent to $v$. 
The \emph{closed neighborhood} of a vertex $v$ is $N_G[v]=N_G(v)\cup\{v\}$
and the \emph{degree} of $v$ is denoted by $d_G(v)=|N_G(v)|$.
For $S\subseteq V(G)$, we denote $N_G(S)=\cup_{v\in S}N_G(v)\setminus S$ and $\delta_G(S)=|N_G(S)|$. We omit the graph in the subscript whenever it is clear from the context.
For positive integers $n$ and $m$, the \emph{$(n\times m)$-grid} is the graph with the vertex set $\{(x,y)|1\leq x\leq n, 1\leq y\leq m\}$ where $x$ and $y$ are integers, such that two vertices $(x,y)$ and $(x',y')$ are adjacent if and only if $|x-x'|+|y-y'|=1$.

Consider the Hunters \& Rabbit game on a graph $G$. Suppose that the Hunter player has $k$ hunters. A \emph{hunters' strategy} is a (possible infinite) sequence $\mathcal {H}=(H_1,H_2,\ldots)$ where $H_i\subseteq  V(G)$ and  $|H_i|\leq k$ for $i\in\{1,2,\ldots\}$; the hunters shoot at each vertex of $H_i$ at the $i$-th round of the game. Respectively, a \emph{rabbit's strategy}  is a sequence $\mathcal{R}=(r_0,r_1,\ldots)$ of vertices of $G$ such that $r_i$ is adjacent to $r_{i-1}$ for $i\geq 1$; $r_0$ is an initial position of the rabbit, and it jumps from $r_{i-1}$ to $r_i$ after the $i$-th shot of the hunters. 
For a set of vertices $S\subseteq V(G)$, a strategy $\mathcal{H}$ is a \emph{winning hunters' strategy with respect to $S$} if for any rabbit's strategy $\mathcal{R}$ such that $r_0\in S$, there is $i\geq 1$ such that $r_{i-1}\in H_i$;  $\mathcal{H}$ is a \emph{winning hunters' strategy} if it is a winning hunters' strategy with respect to $V(G)$.
Therefore, the hunter number $h(G)$ is the minimum $k$ such that there is a winning hunters' strategy for $k$ hunters. We also say that a  rabbit's strategy $\mathcal{R}$ is a \emph{winning rabbit's strategy against a hunters' strategy $\mathcal{H}$} if $r_{i-1}\notin H_i$ for all $i\geq 1$.

As it is common for pursuit-evasion games with invisible fugitives, it is convenient to keep track of vertices that can or, respectively, cannot be occupied by the rabbit. Let $\mathcal {H}=(H_1,H_2,\ldots)$ be a hunters' strategy. For $S\subseteq V(G)$, a vertex $v$ is \emph{contaminated with respect to $S$} after $i$-th shot if there is a rabbit's strategy $\mathcal{R}=(r_0,r_1,\ldots)$ such that $r_0\in S$, $v=r_i$ and for any $j\in\{1,\ldots,i\}$, $r_{j-1}\notin H_j$. Otherwise, we say that $v$ is \emph{clear with respect to $S$}. If $S=V(G)$, we simply say that $v$ is \emph{contaminated} or \emph{clear}. It is easy to see that if $X$ is a set of vertices contaminated at moment $i-1$ w.r.t. some $S$, then the set of vertices contaminated at moment $i$ will be exactly $\Phi(X,H_i)=N(X\setminus H_i)$.

In our proofs we will be using the fact that we can always restrict our attention to finite strategies. 

\begin{proposition}\label{prop:back}
If $k$ hunters have a winning strategy on an $n$-vertex graph $G$ with respect to $S\subseteq V(G)$, then they have a  winning strategy of length at most $2^n$.
\end{proposition}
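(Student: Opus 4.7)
The plan is to view the game from the hunters' perspective as deterministic dynamics on the power set $2^{V(G)}$, and apply a pigeonhole argument. The natural state to track is the set $X_i$ of vertices contaminated with respect to $S$ after the $i$-th shot: we set $X_0=S$ and, by the observation made just before the proposition, $X_i = \Phi(X_{i-1},H_i) = N(X_{i-1}\setminus H_i)$. Crucially, $X_i$ depends only on $X_{i-1}$ and on the current hunters' move $H_i$, so the evolution of contamination is a deterministic walk on the state space $2^{V(G)}$, which has exactly $2^n$ elements.

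Next I would connect this state-space walk to the winning condition. A hunters' strategy $\mathcal{H}=(H_1,H_2,\ldots)$ is winning with respect to $S$ if and only if there exists some $i$ for which every contaminated rabbit trajectory is intercepted; unwinding the definition, this is equivalent to $X_{i-1}\subseteq H_i$, which in turn is equivalent to $X_i = N(\emptyset) = \emptyset$. So a winning strategy is exactly a finite sequence of moves $H_1,\ldots,H_T$ whose induced trajectory $X_0,X_1,\ldots,X_T$ on $2^{V(G)}$ reaches $\emptyset$.

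Given any winning strategy, I would then apply the shortcut argument: if two states along the trajectory coincide, say $X_i = X_j$ with $i<j$, then the truncated strategy $H_1,\ldots,H_i,H_{j+1},\ldots,H_T$ induces the trajectory $X_0,\ldots,X_i,X_{j+1},\ldots,X_T$ (using determinism of $\Phi$), which still ends in $\emptyset$ and is strictly shorter. Iterating this contraction yields a winning strategy whose states are pairwise distinct; since there are at most $2^n$ possible states, this strategy has length at most $2^n-1\leq 2^n$, as required.

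There is no real obstacle here: the only thing to be a little careful about is the indexing convention (whether a "length-$T$ strategy" comprises $T$ shots and $T+1$ states or vice versa), and making explicit that $\Phi(\cdot,H_i)$ being a well-defined function of the current state is exactly what makes the shortcut operation preserve the trajectory after the splice.
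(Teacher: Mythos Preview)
Your proposal is correct and is essentially the same argument as the paper's: both track the set of contaminated vertices as a deterministic walk on the state space $2^{V(G)}$ and then shorten the walk to one with no repeated states. The only cosmetic difference is that the paper packages this as finding a simple directed path in an auxiliary ``arena graph'' on $2^{V(G)}$, whereas you carry out the shortcut argument explicitly on the trajectory.
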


\begin{proof}
Consider the auxiliary arena graph, which is a directed  graph $\mathcal{G}$ with the set of vertices $2^{V(G)}$ such that for any distinct $X,Y\subseteq V(G)$, $\mathcal{G}$ has the arc $(X,Y)$ if and only if there exists a set $H\subseteq V(G)$ of size at most $k$ such that $Y=\Phi(X,H)$. The graph $\mathcal{G}$ has $2^n$ vertices and at most $\binom{n}{k}\cdot 2^n$ arcs. It is easy to observe that 
$k$ hunters have a winning strategy on $G$ if and only if $\mathcal{G}$ has a directed walk that leads from $S$ and $\emptyset$: such paths corresponds to Hunter's strategies, while the traversed vertices of $\mathcal{G}$ keep track of the set of contaminated vertices. Moreover, if such a walk exists, then there is also a directed simple path from $S$ to $\emptyset$, which corresponds to a winning strategy of length at most $2^n$. 
\end{proof}

It is straightforward to observe that the hunter number is closed under taking subgraphs. 

\begin{proposition}\label{prop:subgr}
If $G_1$ is a subgraph of $G_2$, then $h(G_1)\leq h(G_2)$.
\end{proposition}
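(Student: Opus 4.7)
The plan is to establish the inequality by a direct simulation argument: we show that any winning Hunter strategy on $G_2$ can be transferred to a winning Hunter strategy on $G_1$ using no more hunters. No clever construction is needed; the subgraph relation is exactly what makes this monotonicity argument go through.

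Concretely, let $k=h(G_2)$ and fix a winning hunters' strategy $\mathcal{H}=(H_1,H_2,\ldots)$ for $k$ hunters on $G_2$. I would define a strategy $\mathcal{H}'=(H_1',H_2',\ldots)$ on $G_1$ by setting $H_i'=H_i\cap V(G_1)$ for every $i\geq 1$. Since $|H_i|\le k$, we have $|H_i'|\le k$, so $\mathcal{H}'$ uses at most $k$ hunters, as required. This is the only ``construction'' step; all that remains is verifying correctness.

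For correctness, I would argue by contradiction. Suppose $\mathcal{R}=(r_0,r_1,\ldots)$ is a winning rabbit's strategy on $G_1$ against $\mathcal{H}'$, i.e.\ $r_{i-1}\notin H_i'$ for every $i\ge 1$. Since $G_1$ is a subgraph of $G_2$, we have $V(G_1)\subseteq V(G_2)$ and $E(G_1)\subseteq E(G_2)$, so $\mathcal{R}$ is automatically a valid rabbit's strategy on $G_2$ (the consecutive-adjacency condition still holds, now in $G_2$). Moreover, for every $i\ge 1$, the vertex $r_{i-1}$ lies in $V(G_1)$, so the condition $r_{i-1}\notin H_i\cap V(G_1)=H_i'$ upgrades to $r_{i-1}\notin H_i$. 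Thus $\mathcal{R}$ is also a winning rabbit's strategy against $\mathcal{H}$ on $G_2$, contradicting the assumption that $\mathcal{H}$ is winning.

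There is no genuine obstacle here — the argument is essentially a bookkeeping check that shots placed outside $V(G_1)$ contribute nothing on $G_1$ and that the rabbit's legal moves on $G_1$ remain legal on $G_2$. The only pitfall is being careful to treat the definitions of ``subgraph'' and ``winning strategy'' literally, in particular noting that $\mathcal{H}'$ and $\mathcal{R}$ may be infinite sequences (so Proposition~\ref{prop:back} is not needed here).
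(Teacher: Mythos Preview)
Your argument is correct and is exactly the natural simulation argument one would expect here. The paper itself does not supply a proof for this proposition at all---it merely remarks that ``it is straightforward to observe that the hunter number is closed under taking subgraphs'' and states the proposition---so your write-up is in fact more detailed than what the paper provides, but fully in the spirit of what the authors had in mind.
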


We conclude the section by the following property of Hunters \& Rabbit on bipartite graphs.

\begin{lemma}\label{lem:bip}
Let $G$ be a bipartite graph and let $(V_1,V_2)$ be a bipartition of $V(G)$. Then $k$ hunters have a winning strategy on $G$ if and only if $k$ hunters have a winning strategy with respect to $V_1$.
\end{lemma}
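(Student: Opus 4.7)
The forward implication is trivial: any strategy winning with respect to $V(G)=V_1\cup V_2$ is, in particular, winning with respect to $V_1$. The substance is the converse. Given $\mathcal{H}=(H_1,\ldots,H_t)$ winning with respect to $V_1$, I need to construct a strategy that also handles rabbits starting in $V_2$.

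The key observation exploits the bipartite parity. In a bipartite graph any rabbit trajectory $(r_0,r_1,\ldots)$ alternates sides: if $r_0\in V_j$, then $r_i\in V_j$ when $i$ is even and $r_i\in V_{3-j}$ when $i$ is odd. Consequently, after $t$ rounds a rabbit that started in $V_2$ and survived all shots so far occupies a vertex of $V_1$ precisely when $t$ is odd. When this holds, the rabbit's remaining trajectory is itself a legitimate rabbit strategy starting from $V_1$, so it can be caught by rerunning $\mathcal{H}$.

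Concretely, I first remark that appending $H_{t+1}=\emptyset$ to $\mathcal{H}$ preserves the winning property with respect to $V_1$, since the rabbit is already caught before the padding begins. So I may assume that $t$ is odd. I then claim that the doubled strategy $\mathcal{H}':=(H_1,\ldots,H_t,H_1,\ldots,H_t)$ is winning with respect to all of $V(G)$. A rabbit starting in $V_1$ is caught during the first half by hypothesis. A rabbit starting at $r_0\in V_2$ that survives the first half must be located at $r_t\in V_1$ by the parity observation. From that moment on, the shots $H_1,\ldots,H_t$ that constitute the second half of $\mathcal{H}'$ are executed against the suffix trajectory $(r_t,r_{t+1},\ldots,r_{2t})$, which is precisely a rabbit's strategy with initial position $r_t\in V_1$; the winning property of $\mathcal{H}$ then delivers the shot.

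I do not foresee genuine obstacles. The argument is essentially bookkeeping on the bipartition, and the only point to verify carefully is the final ``reset'': the tail $(r_t,r_{t+1},\ldots)$ really is an admissible rabbit strategy starting in $V_1$, and the definition of being winning with respect to $V_1$ applies to it verbatim, independently of how the rabbit arrived at $r_t$. Should one wish, the construction can also be phrased symmetrically (replacing the pad-and-double trick by concatenating $\mathcal{H}$ with the shifted strategy $(\emptyset,H_1,\ldots,H_t)$ when $t$ is even), but the version above keeps the case analysis to one line.
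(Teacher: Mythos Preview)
Your proof is correct and follows essentially the same approach as the paper's: pad the given finite winning strategy to odd length, concatenate two copies, and use bipartite parity to argue that a rabbit starting in $V_2$ that survives the first half must sit in $V_1$ at that moment and is therefore caught by the second half. The only cosmetic differences are that the paper pads with a repeated $H_\ell$ rather than $\emptyset$, and explicitly invokes Proposition~\ref{prop:back} to justify taking the initial strategy to be finite.
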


\begin{proof}
Clearly, if  $k$ hunters have a winning strategy $\mathcal{H}$ on $G$, then $\mathcal{H}$ is a winning strategy with respect to $V_1$. Let $\mathcal{H}$ be a winning strategy on $G$ with respect to $V_1$. By Proposition~\ref{prop:back}, we can assume without loss of generality that $\mathcal{H}=(H_1,\ldots,H_{\ell})$ is finite. Moreover, we assume that $\ell$ is odd; otherwise, we just consider  
$\mathcal{H}=(H_1,\ldots,H_{\ell},H_{\ell})$. Let $\mathcal{H}'$ be the strategy obtained by the concatenation of two copies of the sequence $\mathcal{H}$. We claim that $\mathcal{H}'$ is a winning strategy. To see it consider an arbitrary rabbit's strategy $\mathcal{R}=(r_0,r_1,\ldots)$. If $r_0\in V_1$, then
there is $i\in\{1,\ldots,\ell\}$ such that $r_{i-1}\in H_i$ because $\mathcal{H}$ is a winning hunters' strategy with respect to $V_1$. Suppose then that $r_0\in V_2$. If $r_{i-1}\notin H_i$ for $i\in\{1,\ldots,\ell\}$, then $r_{\ell}\in V_1$ because $\ell$ is odd. Then, there is $j\in\{1,\ldots,\ell\}$ such that $r_{\ell+j-1}\in H_j$.
As in the rounds $\ell+1,\ldots,2\ell$ the hunters repeat $\mathcal{H}$, we have that the hunters shoot the rabbit in the $(\ell+j)$-th round for some $j\in\{1,\ldots,\ell\}$.
\end{proof}

\section{Hunting rabbit on a grid}\label{sec:grid}
In this section we compute the hunter number of an $(n\times m)$-grid. Throughout this section we assume that $n\leq m$. Recall that an $(n,m)$-grid has the vertex set $\{(x,y)\, |\, 1\leq x\leq n, 1\leq y\leq m\}$ and two vertices $(x,y)$ and $(x',y')$ are adjacent if and only if $|x-x'|+|y-y'|=1$. 
For a vertex $(i,j)$, we say that $i$ is the \emph{$x$-coordinate} 
and $j$ is the \emph{$y$-coordinate} of $(i,j)$. 
Clearly, grids are bipartite graphs, and we assume in this section that $(V_1,V_2)$, where $V_1=\{(x,y)\, |\, x+y\text{ is even}\}$ and 
$V_2=\{(x,y)\, |\, x+y\text{ is odd}\}$, is the bipartition of the vertex set of a grid.

\subsection{Isoperimetrical properties of grids}\label{sec:iso}
We need some isoperimetrical properties of subsets of $V_1$ for square grids. Let $G$ be an $(n\times n)$-grid. 

For $i\in\{2,\ldots,2n\}$, let $U_i=\{(x,y)\in V(G)\, |\, x+y=i\}$ (see Fig.~\ref{fig:UW})  and
$$s(i)=|U_i|=
\begin{cases}
i-1&\mbox{if }i\leq n+1,\\
2n-i+1&\mbox{if }i>n+1.
\end{cases}$$
It is assumed that $U_i=\emptyset$ if $i\leq 1$ or $i>2n$. 
We denote the vertices of $U_i$ by $u_1^i,\ldots,u_{s(i)}^i$ and assume that they are ordered by the increase of their $y$-coordinate. For $i\in\{1-n,\ldots,n-1\}$,  
$W_i=\{(x,y)\in V(G)\, |\, x-y=i\}$ (see Fig.~\ref{fig:UW}) and $t(i)=|W_i|=n-|i|$; $W_i=\emptyset$ if $i\leq -n$ or $i\geq n$.
Let $W_i=\{w_1^i,\ldots,w_{t(i)}^i\}$ and assume that the vertices are ordered by the increase of their $y$-coordinate.
Notice that $V_1=U_2\cup U_4\cup\ldots\cup U_{2n}=\ldots\cup W_{-2}\cup W_{0}\cup W_2\cup\ldots$ and $V_2=U_3\cup U_5\cup\ldots\cup U_{2n-1}=\ldots\cup W_{-1}\cup W_{1}\cup \ldots $.

\begin{figure}[ht]
\centering\scalebox{0.75}{\input{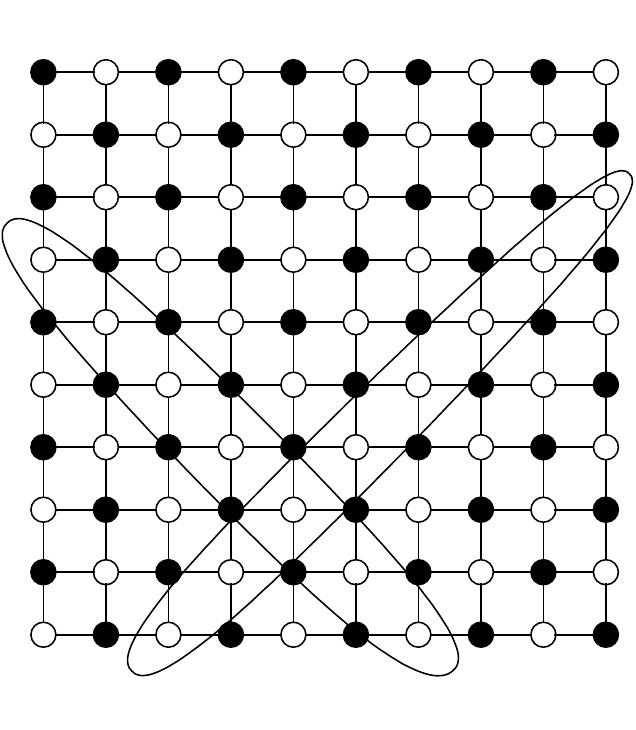_t}}
\caption{Sets $U_8$ and $W_2$ in a $(10\times 10)$-grid; the vertices of $V_1$ are depicted as white bullets and the vertices of $V_2$ are black.
\label{fig:UW}}
\end{figure}

Let $Q\subseteq V_1$. We say that $Q'$ is obtained from $Q$ by the \emph{down-right shifting} if it is constructed as follows: for each even integer $i\in\{2\ldots 2n\}$, all the $r=|U_i\cap Q|$ vertices of $U_i\cap Q$ are replaced by $u_1^i,\ldots,u_r^i$ (see Fig.~\ref{fig:down}). Respectively, 
$Q'$ is obtained from $Q$ by the \emph{down-left shifting} if for each even integer $i\in\{1-n,\ldots, n-1\}$, all the $r=|W_i\cap Q|$ vertices of $W_i\cap Q$ are replaced  by $w_1^i,\ldots,w_r^i$.

\begin{figure}[ht]
\centering\scalebox{0.75}{\input{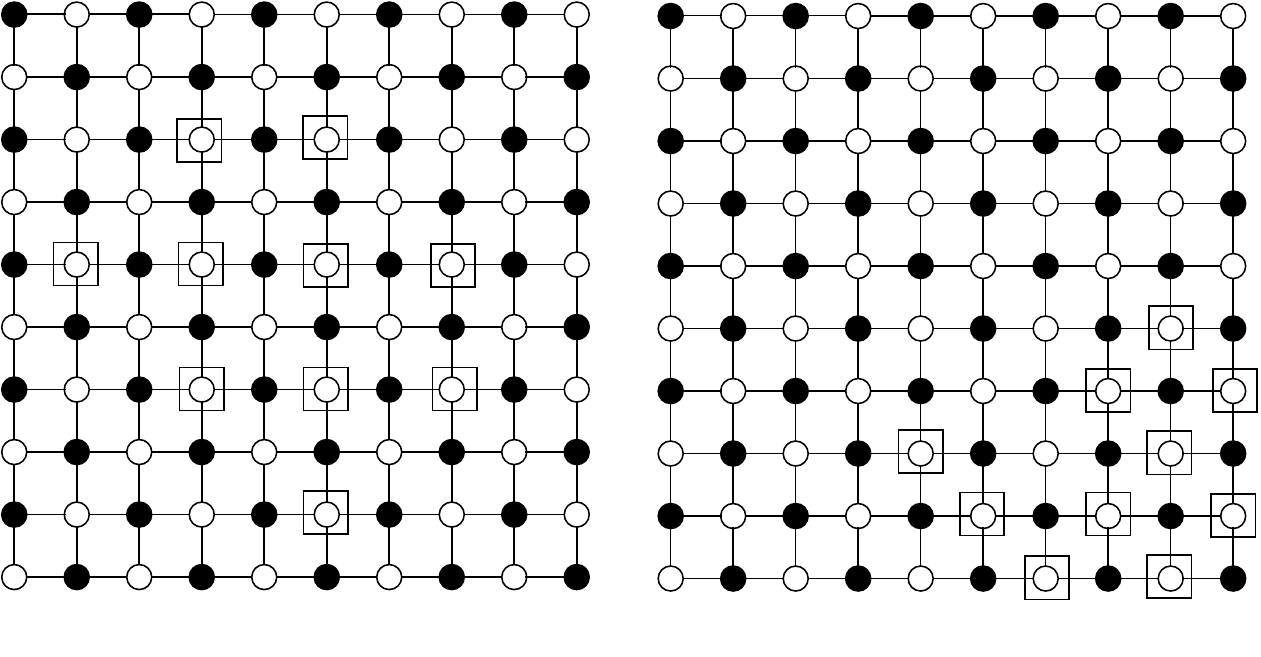_t}}
\caption{An example of the down-right shifting; the vertices of $Q$ (in a) and $Q'$ (in b) are marked by square frames.
\label{fig:down}}
\end{figure}

\begin{lemma}\label{lem:down}
If $Q'$ is obtained from $Q\subseteq V_1$ by the down-right (respectively, down-left) shifting, then $\delta(Q')\leq \delta(Q)$.
\end{lemma}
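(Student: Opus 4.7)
My plan is to bound $N(Q)$ diagonal-by-diagonal. Since $Q\subseteq V_1$ and the grid is bipartite, $N(Q)\subseteq V_2=\bigcup_{j\text{ odd}}U_j$, so it suffices to prove $|N(Q')\cap U_j|\leq|N(Q)\cap U_j|$ for every odd $j$; summation then yields $\delta(Q')\leq\delta(Q)$.

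Fix an odd $j$. For each of the two adjacent (even) diagonals $U_i$ with $i=j\pm 1$, I identify $U_i$ with $\{1,\ldots,s(i)\}$ through the ordering $u^i_1,\ldots,u^i_{s(i)}$ and view $Q\cap U_i$ as a subset $K\subseteq\{1,\ldots,s(i)\}$. Inspecting the four grid-neighbors of $u^i_k$ shows that the set of positions of $U_j$ adjacent to $K$ equals
\[
\phi_i(K)\;=\;\bigl\{\ell\in\{1,\ldots,s(j)\}\,:\,K\cap\{\ell+\alpha_i,\ell+\alpha_i+1\}\neq\emptyset\bigr\},
\]
for a fixed offset $\alpha_i\in\{0,-1\}$ that depends on the direction ($i=j-1$ vs.\ $i=j+1$) and on which side of the longest diagonal $U_{n+1}$ the diagonal $U_i$ lies (the offset ``flips'' at the pivot $i=n+1$). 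Hence $N(Q)\cap U_j=\phi_{j-1}(Q\cap U_{j-1})\cup\phi_{j+1}(Q\cap U_{j+1})$.

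A short direct check shows that whenever $K$ is an initial segment $\{1,\ldots,r\}$ the dilation $\phi_i(K)$ is again an initial segment of $U_j$, of some size $c_i\leq s(j)$. After the down-right shift, each $Q'\cap U_i=\{u^i_1,\ldots,u^i_{r_i}\}$ is precisely such an initial segment, so $N(Q')\cap U_j$ is a union of two initial segments of $U_j$, hence itself an initial segment of size $\max(c_{j-1},c_{j+1})$. Granted the compression inequality
\[
|\phi_i(K)|\;\geq\;|\phi_i(\{1,\ldots,|K|\})|\qquad\text{for every }K\subseteq\{1,\ldots,s(i)\},
\]
we obtain $|N(Q)\cap U_j|\geq\max(|\phi_{j-1}(Q\cap U_{j-1})|,|\phi_{j+1}(Q\cap U_{j+1})|)\geq\max(c_{j-1},c_{j+1})=|N(Q')\cap U_j|$, finishing the proof. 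The down-left case is identical after replacing the diagonals $U_i$ by the antidiagonals $W_i$ (where $x-y$ is constant) throughout.

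The main technical obstacle is establishing the compression inequality in the presence of the boundary truncation to $\{1,\ldots,s(j)\}$, since $s(j)$ can be either $s(i)+1$ (on the ``growing'' transition) or $s(i)-1$ (on the ``shrinking'' transition or at the pivot). I plan to handle both uniformly by a runs counting: decomposing $K$ into $q$ maximal blocks of consecutive integers, one has $|K\cup(K\pm 1)|=|K|+q$, and the subsequent truncation to $\{1,\ldots,s(j)\}$ loses at most $\mathbf{1}_{1\in K}+\mathbf{1}_{s(i)\in K}$ elements. The crux then is the combinatorial inequality $q\geq\mathbf{1}_{1\in K}+\mathbf{1}_{s(i)\in K}$, which holds except when $K=\{1,\ldots,s(i)\}$ fills the whole diagonal; that degenerate boundary case is verified separately by direct substitution, yielding equality.
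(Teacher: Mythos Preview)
Your approach is correct and is essentially identical to the paper's: both decompose $\delta$ over the odd diagonals $U_j$, observe that after shifting $N(Q')\cap U_j$ is the union of two initial segments and hence has size $\max(d_j^-,d_j^+)$, and conclude via the one-sided compression inequality $|\phi_i(K)|\geq|\phi_i(\{1,\ldots,|K|\})|$, which the paper simply declares ``straightforward to verify'' while you supply a runs-counting justification. One small caveat in your uniform treatment: the inequality $q\geq\mathbf{1}_{1\in K}+\mathbf{1}_{s(i)\in K}$ only yields $|\phi_i(K)|\geq|K|$, which equals the initial-segment value on a \emph{shrinking} transition but is one short on a \emph{growing} transition (there the target is $|K|+1$); on growing transitions no truncation occurs, so the correct and simpler bound is $|\phi_i(K)|=|K|+q\geq|K|+1$.
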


\begin{proof}
We prove the lemma for the down-right shifting. The proof for the down-left shifting uses symmetric arguments. 

For an odd integer $i\in \{3,5,\ldots,2n-1\}$, let us define the following numbers:
\begin{itemize}
\item $c_i=|U_i\cap N(Q)|$, $c_i^-=|U_i\cap N(Q\cap U_{i-1})|$ and $c_i^+=|U_i\cap N(Q\cap U_{i+1})|$;
\item $d_i=|U_i\cap N(Q')|$, $d_i^-=|U_i\cap N(Q'\cap U_{i-1})|$ and $d_i^+=|U_i\cap N(Q'\cap U_{i+1})|$.
\end{itemize}
By the construction of $Q'$ it is straightforward to verify that $c_i^-\geq d_i^-$ and $c_i^+\geq d_i^+$. Since all elements of $Q$ that neighbor a vertex of $U_i$ reside either in $U_{i-1}$ or in $U_{i+1}$, we have that $c_i\geq \max(c_i^-,c_i^+)$. However, since vertices of $Q'\cap U_{i-1}$ are exactly the $|Q'\cap U_{i-1}|$ vertices of $U_{i-1}$ that have the smallest $y$-coordinate, and the same also holds for $Q'\cap U_{i+1}$, then it is easy to see that $d_i=\max(d_i^-,d_i^+)$. Hence, we obtain that
\begin{eqnarray*}
\delta(Q) & = & \sum_{j=1}^{n-1} c_{2j+1} \geq \sum_{j=1}^{n-1} \max(c_{2j+1}^-,c_{2j+1}^+)\\
& \geq & \sum_{j=1}^{n-1} \max(d_{2j+1}^-,d_{2j+1}^+) = \sum_{j=1}^{n-1} d_{2j+1} = \delta(Q').
\end{eqnarray*}
\end{proof}

Thus, we already have two operations that preserve the cardinality of a set $Q$ while not incresing $\delta(Q)$: down-left and down-right shifting. We may now inspect sets $Q\subseteq V_1$ that are invariant with respect to both these operations, and it is easy to see that these are exactly sets conforming to the following definition. We say that $Q\subseteq V_1$ is a \emph{pyramidal} set if for any $(x,y)\in Q$ such that $y\geq 2$, $(x-1,y-1)\in Q$ if $x\geq 2$ and $(x+1,y-1)\in Q$ if $x\leq n-1$. 

For $i\in\{1,\ldots,n\}$, let $R_i=\{(x,y)|1\leq x\leq n, y=i\}$, $X_i=R_i\cap V_1$ and $\overline{X}_i=R_i\cap V_2$. Let also 
$$\ell(i)=|X_i|=
\begin{cases}
\lfloor n/2\rfloor &\mbox{if }i\mbox{ is even},\\
\lceil n/2 \rceil &\mbox{if }i\mbox{ is odd}.
\end{cases}$$
Denote by $x_1^i,\ldots,x_{\ell(i)}^i$ the vertices of $X_i$ and assume that they are 
are ordered by the increase of their $x$-coordinate.

\begin{lemma}\label{lem:pyr-formula}
Suppose $Q\subseteq V_1$ is a pyramidal set. Then
$$\delta(Q)=|Q|-|Q\cap X_n|+|N(Q\cap X_1)\cap \overline{X}_1|.$$
\end{lemma}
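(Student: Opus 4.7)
The plan is to compute $\delta(Q)$ by decomposing $N(Q)$ row by row and using pyramidality to eliminate redundancies. Since $Q \subseteq V_1$ and the grid is bipartite, $N(Q) \subseteq V_2$, and I would partition it as $\delta(Q) = \sum_{i=1}^n |N(Q) \cap \overline{X}_i|$. A vertex $(x, i) \in \overline{X}_i$ belongs to $N(Q)$ precisely when one of its (at most four) neighbors $(x \pm 1, i), (x, i \pm 1)$ lies in $Q$. To keep the three sources of such neighbors separate, I introduce, with $Q_i = Q \cap X_i$ and the conventions $B_1 = C_n = \emptyset$,
\[A_i = N(Q_i) \cap \overline{X}_i, \quad B_i = \{(x,i) \in \overline{X}_i : (x, i-1) \in Q_{i-1}\}, \quad C_i = \{(x,i) \in \overline{X}_i : (x, i+1) \in Q_{i+1}\},\]
so that $N(Q) \cap \overline{X}_i = A_i \cup B_i \cup C_i$ for every $i$.

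The whole content of the lemma is then captured by two elementary inclusions that follow immediately from the pyramidal condition. First, $C_i \subseteq A_i$ for every $i$: if $(x, i+1) \in Q_{i+1}$, then pyramidality forces at least one of $(x-1, i), (x+1, i)$ to lie in $Q_i$ (both, if $2 \leq x \leq n-1$), and $(x, i)$ is adjacent to either of them. Second, $A_i \subseteq B_i$ for every $i \geq 2$: if $(x+1, i) \in Q_i$ (or symmetrically $(x-1, i) \in Q_i$), then applying pyramidality to that vertex, using that $i \geq 2$, yields $(x, i-1) \in Q_{i-1}$, so that $(x, i) \in B_i$. Combining these, I get $N(Q) \cap \overline{X}_1 = A_1 \cup C_1 = A_1 = N(Q \cap X_1) \cap \overline{X}_1$, which is precisely the last term of the formula, and $N(Q) \cap \overline{X}_i = B_i$ for every $i \in \{2,\ldots,n\}$.

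It then only remains to count $\sum_{i=2}^n |B_i|$. The map $(x, i-1) \mapsto (x, i)$ is a bijection from $Q_{i-1}$ onto $B_i$ for every $i \leq n$, because $(x,i)$ is always a valid grid vertex and one can recover the source by reading off the second coordinate. Hence $|B_i| = |Q_{i-1}|$, and telescoping gives $\sum_{i=2}^n |B_i| = \sum_{j=1}^{n-1} |Q_j| = |Q| - |Q \cap X_n|$. Adding the row-$1$ contribution $|N(Q \cap X_1) \cap \overline{X}_1|$ yields exactly the claimed identity.

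The only subtlety I anticipate is handling the boundary columns $x = 1$ and $x = n$ in the inclusion $C_i \subseteq A_i$, where only one of the two diagonal ``descendants'' of $(x, i+1)$ promised by pyramidality lies in the grid; this is the step requiring the most care, but it causes no real difficulty, since a single such descendant already suffices to witness membership in $A_i$. Everything else is bookkeeping.
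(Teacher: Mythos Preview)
Your proof is correct and follows essentially the same approach as the paper: both arguments rest on the observation that for every $(x,y)\in N(Q)$ with $y\geq 2$ one has $(x,y-1)\in Q$, and then exploit the resulting bijection $(x,y-1)\leftrightarrow(x,y)$ between $Q\setminus X_n$ and $N(Q)\setminus\overline{X}_1$. The paper phrases this as a single matching; you unpack the same content into the row-wise inclusions $C_i\subseteq A_i\subseteq B_i$, but the underlying idea and the counting of unmatched elements on each side are identical.
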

\begin{proof}
Take any $(x,y)\in N(Q)$ such that $y\geq 2$. As $(x,y)\in N(Q)$, then one of neighboring four vertices of $G$ belongs to $Q$, and due to $Q$ being pyramidal we have that $(x,y-1)\in Q$. Let us construct a matching $M$ between vertices of $N(Q)$ and vertices of $Q$ that matches every vertex $(x,y)\in N(Q)$ with $y\geq 2$ with vertex $(x,y-1)\in Q$. Then, on the side of $N(Q)$ the only unmatched vertices are the vertices of $N(Q)\cap R_1$, and from the fact that $Q$ is pyramidal it follows that these are these are exactly the vertices of $N(Q\cap X_1)\cap \overline{X}_1$. On the side of $Q$ the only unmatched vertices are the vertices of $Q\cap R_n=Q\cap X_n$. Thus, the claimed formula on $\delta(Q)$ follows. 
\end{proof}

We have already introduced shiftings along diagonals of the grid, so now we introduce shifting along the rows. Take any $Q\subseteq V_1$. We say that $Q'$ is obtained from $Q$ by the \emph{left shifting} if it is constructed as follows: for each  integer $i\in\{1\ldots n\}$, all the $r=|X_i\cap Q|$ vertices of $X_i\cap Q$ are replaced by $x_1^i,\ldots,x_r^i$. See Fig.~\ref{fig:left} for an example.
Respectively, $Q'$ is obtained from $Q$ by the \emph{right shifting} if for each  integer $i\in\{1,n\}$, all the $r=|X_i\cap Q|$ vertices of $X_i\cap Q$ are replaced by $x_{\ell(i)-r+1}^i,\ldots,x_{\ell(i)}^i$. A pyramidal set $Q\subseteq V_1$ is  called  \emph{left-pyramidal}  if for any $(x,y)\in Q$ with $x\geq 3$ we also have that 
$(x-2,y)\in Q$. Respectively, $Q$ is  \emph{right-pyramidal} if for any $(x,y)\in Q$ with $x\leq n-2$ we also have that $(x+2,y)\in Q$. 

For a pyramidal set $Q\subseteq V_1$, let $i_1(Q)=0$ if $(1,y)\notin Q$ for all $y\in\{1,\ldots,n\}$ and 
$i_1(Q)=\max\{y\, |\, (1,y)\in Q\}$ otherwise. Similarly, let $i_2(Q)=0$ if $(n,y)\notin Q$ for all $y\in\{1,\ldots,n\}$ and 
$i_2(Q)=\max\{y\, |\, (n,y)\in Q\}$ otherwise.

\begin{figure}[ht]
\centering\scalebox{0.75}{\input{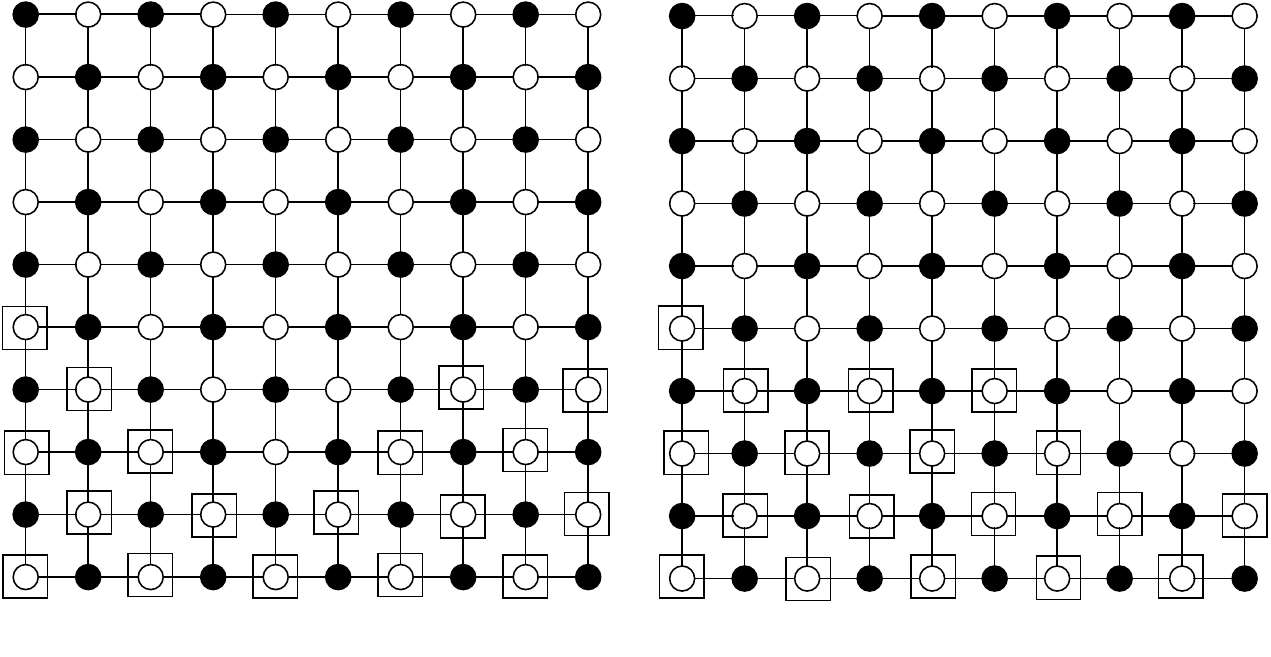_t}}
\caption{An example of the left shifting, $i_1=5$ and $i_2=4$; the vertices of $Q$ (in a) and $Q'$ (in b) are marked by square frames.
\label{fig:left}}
\end{figure}

\begin{lemma}\label{lem:left-right}
Let $Q\subseteq V_1$ be a pyramidal set. 
If $i_1(Q)\geq i_2(Q)$ (as in Fig.~\ref{fig:left}),  then  $Q'$ obtained from $Q$ by the left shifting is left-pyramidal and satisfies $\delta(Q')\leq \delta(Q)$. Respectively,
if $i_1(Q)\leq i_2(Q)$, then $Q'$ obtained from $Q$ by the right shifting is right-pyramidal and satisfies $\delta(Q')\leq \delta(Q)$.
\end{lemma}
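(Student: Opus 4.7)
The plan is to prove the statement for the left-shifting; the right-shifting case is symmetric via the reflection $x \mapsto n+1-x$. First, $Q'$ is automatically left-pyramidal in the row-filling sense: by construction $Q' \cap X_i = \{x_1^i,\ldots,x_{r_i}^i\}$ with $r_i = |Q \cap X_i|$, so for every $(x,y) \in Q'$ with $x \geq 3$ we have $(x-2,y) \in Q'$. It therefore remains to establish that $Q'$ is pyramidal and that $\delta(Q') \leq \delta(Q)$.

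For pyramidality I translate the requirement into inequalities between consecutive row-counts $r_y, r_{y-1}$. A short case analysis shows that the left-shifted row $Q' \cap X_y$ forces precisely the leftmost $t'_y$ positions in $X_{y-1}$, where $t'_y = r_y$ when $y$ is odd and $t'_y = r_y+1$ when $y$ is even (with the obvious truncation at $\ell(y-1)$). In every sub-case but one, the set of positions in $X_{y-1}$ that are forced by the original $Q \cap X_y$ already has size at least $t'_y$, so $r_{y-1} \geq t'_y$ follows from pyramidality of $Q$. The exception is when $n$ is even, $y$ is even, and $Q \cap X_y$ is a single rightmost interval of length $r_y < \ell(y)$ ending at $(n,y)$: here the original forced set has only $r_y$ elements, because the extra ``$+1$'' neighbor is lost to the right boundary. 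In this situation $(n,y) \in Q$ yields $i_2(Q) \geq y$, so $i_1(Q) \geq y$ by hypothesis; since $i_1(Q)$ is odd while $y$ is even, in fact $i_1(Q) \geq y+1$. Pyramidality of $Q$ then forces the entire pyramid rooted at $(1,i_1(Q))$ to lie inside $Q$, which in particular places $(2,y)$ in $Q$. Thus $Q \cap X_y$ contains both its leftmost and rightmost element while $r_y < \ell(y)$, so it splits into at least two maximal intervals (indexed within $X_y$); a short counting argument showing that each maximal interval contributes one extra element to the forced set in $X_{y-1}$ then yields at least $r_y + 1 = t'_y$ forced positions, as required.

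For the inequality $\delta(Q') \leq \delta(Q)$, both sets are now pyramidal and Lemma~\ref{lem:pyr-formula} applies to each. Since left-shifting preserves every row-count, $|Q'|=|Q|$ and $|Q' \cap X_n|=|Q \cap X_n|$, so the inequality reduces to
\[
|N(Q' \cap X_1) \cap \overline{X}_1| \;\leq\; |N(Q \cap X_1) \cap \overline{X}_1|.
\]
Row $1$ behaves like a path and the map $S \mapsto N(S) \cap \overline{X}_1$ is a standard ``shadow'' operation on subsets of $X_1$; the elementary fact that the leftmost prefix of $X_1$ of size $r_1$ minimizes this shadow (the shadow of a union of $k$ maximal intervals has size $r_1 + k$ up to boundary corrections, and the leftmost prefix attains both $k=1$ and the left-boundary saving) finishes the proof.

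The hard part is the pyramidality case analysis: one has to juggle the parities of $y$ and $n$ together with boundary effects at the right edge of the grid in order to single out the pathological configuration that the hypothesis $i_1(Q) \geq i_2(Q)$ is designed to exclude. Once that configuration is identified as ``a rightmost-interval row $X_y$ of $Q$ with no left support'', the hypothesis rules it out in one stroke via the forced pyramid from $(1,i_1(Q))$, and the remainder of the argument is bookkeeping.
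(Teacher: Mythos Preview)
Your approach is essentially the paper's: establish pyramidality of $Q'$ by a row-by-row analysis that isolates a single pathological configuration (for $n$ and $y$ even, $Q\cap X_y$ a proper rightmost interval), rule it out using the hypothesis $i_1\geq i_2$, and then deduce $\delta(Q')\leq\delta(Q)$ from Lemma~\ref{lem:pyr-formula} by comparing shadows in row~$1$. The only substantive difference is how the contradiction is reached in the pathological case: the paper reads off $(1,y-1)\notin Q$ directly from the forced structure of $Q\cap X_{y-1}$ and concludes $i_1(Q)<y\leq i_2(Q)$, whereas you go the other way, using $i_1(Q)\geq i_2(Q)\geq y$ (hence $i_1(Q)\geq y+1$ by parity) and the downward pyramid from $(1,i_1(Q))$ to force $(2,y)\in Q$. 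Both are valid and equally short.

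One caveat: your appeal to the reflection $x\mapsto n+1-x$ for the right-shifting case is clean only when $n$ is odd; for $n$ even this reflection swaps $V_1$ and $V_2$, so the row-$1$ shadow computation is not literally symmetric (the rightmost prefix of $X_1$ has shadow of size $r_1+1$, not $r_1$). The paper treats this explicitly: one checks that if $|N(Q\cap X_1)\cap\overline{X}_1|=|Q\cap X_1|$ then $Q\cap X_1$ must be a leftmost prefix, forcing $i_1(Q)>0=i_2(Q)$ and contradicting the right-shifting hypothesis. This is a two-line patch, but your symmetry claim as stated does not cover it.
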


\begin{proof}

We first prove that if $Q'$ is obtained by the left shifting, then $Q'$ is left-pyramidal, and if $Q'$ is obtained by the right shifting, then $Q'$ is right-pyramidal.
It is straightforward to see that if $Q'$ is obtained by the left shifting, then $(x,y)\in Q'$ implies $(x-2,y)\in Q'$ whenever $x\geq 3$. Symmetrically, 
if $Q'$ is obtained by the right shifting, then $(x,y)\in Q'$ implies $(x+2,y)\in Q'$ whenever $x\leq n-2$. Hence, we only have to prove that $Q'$ is pyramidal, i.e., 
for any $(x,y)\in Q'$ such that $y\geq 2$, it holds that $(x-1,y-1)\in Q'$ provided $x\geq 2$ and $(x+1,y-1)\in Q'$ provided $x\leq n-1$.
Let us fix some $(x,y)\in Q'$ with $y\geq 2$. 

Assume first that $Q'$ is obtained from $Q$ by the left-shifting.
If $X_y\cap Q=X_y$, i.e., $Q$ occupies the whole $X_y$, then because $Q$ is pyramidal, we also have $X_{y-1}\cap Q=X_{y-1}$. From the construction of $Q'$ it follows that $X_y\cap Q'=X_y$ and $X_{y-1}\cap Q'=X_{y-1}$, and the claimed condition holds for $(x,y)$ trivially. 
Suppose then that $X_y\setminus Q\neq \emptyset$.
In such a situation it can be easily seen that $|Q\cap X_y|\geq |Q\cap X_{y-1}|$ because $Q$ is pyramidal, and hence also $|Q'\cap X_y|\geq |Q'\cap X_{y-1}|$. By the construction of $Q'$, we infer that $(x-1,y-1)\in Q'$ provided $x\geq 2$. The second property, i.e. $(x+1,y-1)\in Q'$ provided $x\leq n-1$, also follows if at least one of the following conditions holds:
$|Q\cap X_y|> |Q\cap X_{y-1}|$ or $y$ is odd. Thus, the only remaining case is when $|Q\cap X_y|=|Q\cap X_{y-1}|$ and $y$ is even. Since $X_y\setminus Q\neq \emptyset$ and $Q$ is pyramidal, one can easily verify that the only situation when $|Q\cap X_y|=|Q\cap X_{y-1}|$ is the following: $n$ is even, $Q\cap X_y=\{x_r^y,\ldots,x_{\ell(y)}^y\}$ for some $r\in\{2,\ldots,\ell(y)\}$, where $x_{\ell(y)}^y=(n,y)$, and $Q\cap X_{y-1}=\{x_{r}^{y-1},\ldots,x_{\ell(y-1)}^{y-1}\}$. But then $(n,y)\in Q$ and $(1,y-1)\notin Q$ and we have that $i_1(Q)<y\leq i_2(Q)$. This is a contradiction with the assumption that $Q'$ is obtained by the left shifting.

The arguments for the case when $Q$ is obtained by the right shifting are exactly symmetric, and hence we omit the second check.

We are left with proving that $\delta(Q')\leq \delta(Q)$. Since we already know that both $Q'$ and $Q$ are pyramidal, from Lemma~\ref{lem:pyr-formula} we infer that it suffices to prove that $|N(Q\cap X_1)\cap \overline{X}_1|\geq |N(Q'\cap X_1)\cap \overline{X}_1|$. If $X_1\subseteq Q$, then $Q'\cap X_1=Q\cap X_1=X_1$ and the condition holds trivially. Otherwise, if $X_1\setminus Q\neq \emptyset$, it can be easily seen that $|N(Q\cap X_1)\cap \overline{X}_1|\geq |Q\cap X_1|$. On the other hand, by the construction of $Q'$ we have that $|N(Q'\cap X_1)\cap \overline{X}_1|= |Q'\cap X_1|=|Q\cap X_1|$ apart from the situation when $n$ is even and $Q'$ was obtained by the right shifting; In this case we have $|N(Q'\cap X_1)\cap \overline{X}_1|=|Q\cap X_1|+1$, and this is the only situation left. Observe, however, that provided $n$ is even and $X_1\setminus Q\neq \emptyset$, the only situation with $|N(Q\cap X_1)\cap \overline{X}_1|=|Q\cap X_1|$ is when $Q=\{x_1^1,x_2^1,\ldots,x_r^1\}$ for some $r<\ell(1)$, and otherwise we are done. But then we would have that $(1,1)\in Q$ and $(n-1,1)\notin Q$, which, by $Q$ being pyramidal, implies that $i_1(Q)>0=i_2(Q)$. This is a contradiction with the fact that $Q'$ was obtained by the right shifting.
\end{proof}

We say that a left-pyramidal set $Q\subseteq V_1$ has a {\em{left spot at $(i,j)$}} if $(i,j)\in Q$ and $(i-1,j+1)\in V(G)\setminus Q$. Similarly, a right-pyramidal set $Q\subseteq V_1$ has a {\em{right spot at $(i,j)$}} if $(i,j)\in Q$ and $(i+1,j+1)\in V(G)\setminus Q$. Obviously, all the left spots of a left pyramidal set have pairwise different $x$-coordinates, and the same also holds for right spots of right pyramidal sets. 

The following two lemmas can be easily verified by a direct check using Lemma~\ref{lem:pyr-formula}.

\begin{lemma}\label{lem:terr-left}
Let $Q\subseteq V_1$ be a left-pyramidal set with two different left spots $(i_1,j_1)$ and $(i_2,j_2)$, such that $(i_1,j_1)$ and $(i_2,j_2)$ have the smallest and the largest $x$-coordinates among the left spots of $Q$, respectively. Construct $Q'=Q\setminus \{(i_2,j_2)\}\cup \{(i_1-1,j_1+1)\}$. Then $Q'$ is also a left-pyramidal set and $\delta(Q')\leq \delta(Q)$.
\end{lemma}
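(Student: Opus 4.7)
The plan is to verify directly that the proposed surgery preserves left-pyramidality and does not increase $\delta$, with the $\delta$-inequality following mechanically from Lemma~\ref{lem:pyr-formula}.

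First I would confirm the surgery is well-defined. The left-spot condition at $(i_1,j_1)$ guarantees $(i_1-1,j_1+1)\in V(G)\setminus Q$, and distinctness of $x$-coordinates of left spots yields $i_1<i_2$, so $(i_1-1,j_1+1)\ne(i_2,j_2)$ and $|Q'|=|Q|$.

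The bulk of the argument is the pyramidality/left-pyramidality check. For the removal of $(i_2,j_2)$: the pyramidal constraints that demand $(i_2,j_2)\in Q$ come only from $(i_2\pm 1,j_2+1)$. The vertex $(i_2-1,j_2+1)$ is outside $Q$ by the very definition of left spot, and $(i_2+1,j_2+1)$ cannot be in $Q$ either, because if it were, left-pyramidality would force $(i_2-1,j_2+1)\in Q$, a contradiction. The only left-pyramidality constraint demanding $(i_2,j_2)$ would come from $(i_2+2,j_2)$, but such a vertex, if in $Q$, would be a left spot of $x$-coordinate $i_2+2$, contradicting the maximality of $i_2$. For the addition of $(i_1-1,j_1+1)$: its pyramidal obligations are $(i_1-2,j_1)$ (when $i_1\ge 3$), which lies in $Q$ by left-pyramidality applied at $(i_1,j_1)$, and $(i_1,j_1)$ itself, which remains in $Q'$ because $i_1<i_2$. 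The one new left-pyramidality obligation, relevant when $i_1\ge 4$, is $(i_1-3,j_1+1)\in Q'$; this holds because $(i_1-2,j_1)\in Q$ cannot be a left spot (its $x$-coordinate is smaller than $i_1$), which forces $(i_1-3,j_1+1)\in Q$.

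Once $Q'$ is confirmed to be left-pyramidal, Lemma~\ref{lem:pyr-formula} reduces the $\delta$-comparison, after cancelling $|Q'|=|Q|$, to tracking
\[
\Delta=-\bigl(|Q'\cap X_n|-|Q\cap X_n|\bigr)+\bigl(|N(Q'\cap X_1)\cap\overline{X}_1|-|N(Q\cap X_1)\cap\overline{X}_1|\bigr).
\]
Since $j_2\le n-1$ and $j_1+1\ge 2$, neither the removal nor the addition can subtract from $|\cdot\cap X_n|$, so the first bracket lies in $\{0,1\}$. On the $X_1$ side, addition contributes nothing (as $j_1+1\ne 1$), while removal either leaves $Q\cap X_1$ untouched or deletes the single element $(i_2,1)$, which can only shrink its $\overline{X}_1$-neighborhood. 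Thus $\Delta\le 0$ and $\delta(Q')\le\delta(Q)$. The main obstacle will be the structural case analysis showing preservation of left-pyramidality: it is there that the extremality of $i_1$ and $i_2$ among $x$-coordinates of left spots must be invoked at the right moments to rule out the only configurations that could break the invariants.
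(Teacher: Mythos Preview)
Your proof is correct and is precisely the ``direct check using Lemma~\ref{lem:pyr-formula}'' that the paper invokes without giving details; you have carried out that check carefully, using the extremality of $i_1$ and $i_2$ exactly where it is needed.
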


\begin{lemma}\label{lem:terr-right}
Let $Q\subseteq V_1$ be a right-pyramidal set with two different right spots $(i_1,j_1)$ and $(i_2,j_2)$, such that $(i_1,j_1)$ and $(i_2,j_2)$ have the largest and the smallest $x$-coordinates among the right spots of $Q$, respectively. Construct $Q'=Q\setminus \{(i_2,j_2)\}\cup \{(i_1+1,j_1+1)\}$. Then $Q'$ is also a right-pyramidal set and $\delta(Q')\leq \delta(Q)$.
\end{lemma}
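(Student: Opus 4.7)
My plan is to first verify that $Q'$ is right-pyramidal, and then deduce $\delta(Q')\leq \delta(Q)$ from Lemma~\ref{lem:pyr-formula}. For the setup, note that $(i_1+1,j_1+1)$ lies in $V(G)\cap V_1$ (since $(i_1,j_1)$ is a right spot and the coordinates sum to an even number) and is distinct from $(i_2,j_2)$, because right spots of a right-pyramidal set have pairwise different $x$-coordinates and hence $i_2<i_1$. Consequently $|Q'|=|Q|$.

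The most delicate part is verifying right-pyramidality of $Q'$. The potential obstacles are: (i) some vertex of $Q\setminus\{(i_2,j_2)\}$ that pointed to $(i_2,j_2)$ via a pyramidality or right-pyramidality requirement, and (ii) the newly added vertex $(i_1+1,j_1+1)$ pointing to a vertex outside $Q'$. The dangerous configurations in (i) are $(i_2-1,j_2+1)\in Q$ (for pyramidality) or $(i_2-2,j_2)\in Q$ (for right-pyramidality); both are excluded by the minimality of $i_2$, since in each case either the offending vertex is itself a right spot with $x$-coordinate smaller than $i_2$, or, via pyramidality of $Q$, we would be forced to have $(i_2+1,j_2+1)\in Q$, contradicting that $(i_2,j_2)$ is a right spot. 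For case~(ii), the required neighbors are $(i_1,j_1)$ and $(i_1+2,j_1)$ (for pyramidality at the new vertex) and $(i_1+3,j_1+1)$ (for right-pyramidality, when $i_1\leq n-3$); the first two belong to $Q\setminus\{(i_2,j_2)\}$ by right-pyramidality of $Q$ combined with $i_2<i_1$, while the third must lie in $Q$, for otherwise $(i_1+2,j_1)$ itself would be a right spot with $x$-coordinate exceeding $i_1$.

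With right-pyramidality established, applying Lemma~\ref{lem:pyr-formula} to both $Q$ and $Q'$ yields
$$
\delta(Q')-\delta(Q) = -\bigl(|Q'\cap X_n|-|Q\cap X_n|\bigr) + \bigl(|N(Q'\cap X_1)\cap\overline{X}_1|-|N(Q\cap X_1)\cap\overline{X}_1|\bigr).
$$
Since the added vertex has $y$-coordinate $j_1+1\geq 2$, it cannot enter $X_1$ and can only augment $Q'\cap X_n$ (by $1$, when $j_1=n-1$); since the removed vertex has $y$-coordinate $j_2\leq n-1$, it cannot lie in $X_n$, and its removal can only decrease $|N(Q\cap X_1)\cap\overline{X}_1|$ (when $j_2=1$). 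Thus the first bracketed term is nonnegative and the second is nonpositive, giving $\delta(Q')\leq \delta(Q)$. The heart of the argument is the pyramidality check; the $\delta$-bound itself is then a routine application of the formula, exactly mirroring the symmetric Lemma~\ref{lem:terr-left}.
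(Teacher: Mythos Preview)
Your proof is correct and follows exactly the approach the paper intends: a direct check that $Q'$ is right-pyramidal together with an application of Lemma~\ref{lem:pyr-formula}. One tiny wording slip: when you rule out $(i_2-1,j_2+1)\in Q$ and $(i_2-2,j_2)\in Q$, the cleanest way to force $(i_2+1,j_2+1)\in Q$ is by \emph{right}-pyramidality (shifting two steps to the right) rather than plain pyramidality, which also covers the boundary case $j_2=n-1$; the conclusion is unaffected.
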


\begin{figure}[ht]
\centering\scalebox{0.75}{\input{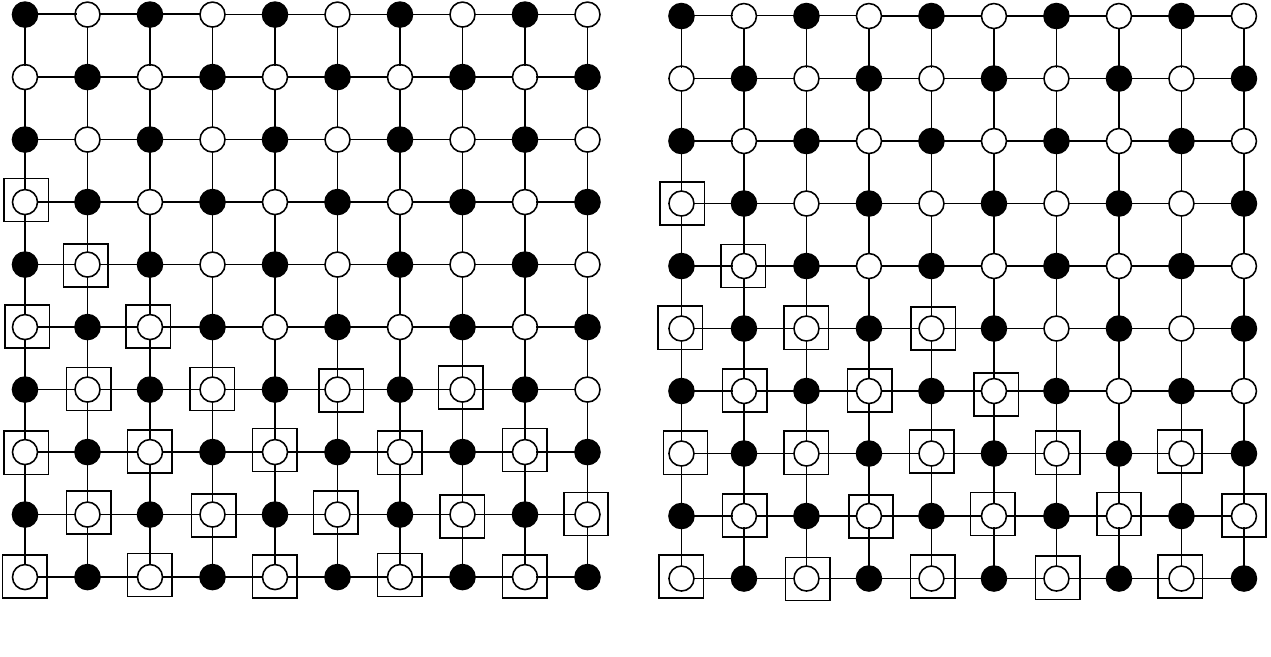_t}}
\caption{Replacement of Lemma~\ref{lem:terr-left} for $(i_1,j_1)=(4,6)$ and $(i_2,j_2)=(4,8)$; the vertices of $Q$ (in a) and $Q'$ (in b) are marked by square frames.
\label{fig:terr-1}}
\end{figure}

\begin{figure}[ht]
\centering\scalebox{0.75}{\input{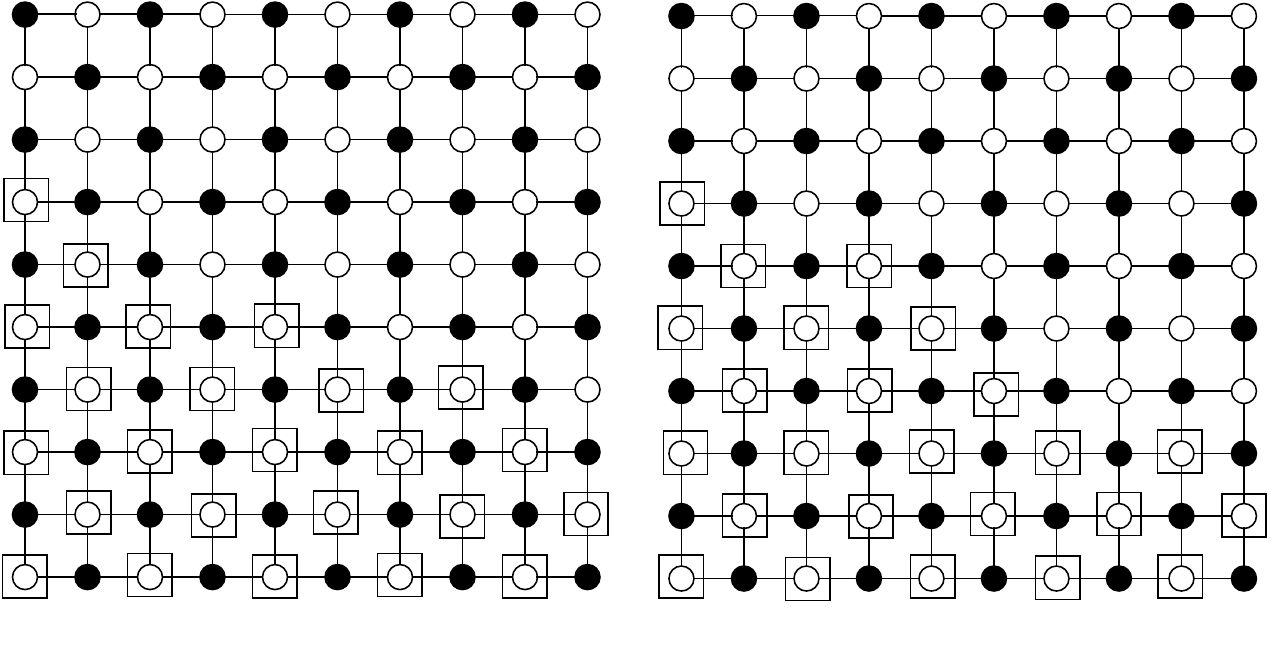_t}}
\caption{Replacement of Lemma~\ref{lem:terr-left} for $(i_1,j_1)=(5,5)$ and $(i_2,j_2)=(4,8)$; the vertices of $Q$ (in a) and $Q'$ (in b) are marked by square frames.
\label{fig:terr-2}}
\end{figure}

Note that the transformations of Lemmas~\ref{lem:terr-left} and~\ref{lem:terr-right} can be applied as long as the set $Q$ in question has at least two left (resp. right) spots.

Recall that $V_1=U_2\cup U_4\cup\ldots\cup U_{2n}$. We define the ordering $v_1,\ldots,v_{\lceil n^2/2\rceil}$ of the vertices of $V_1$ as follows: the sequence enumerates consequently the vertices of $U_2,U_4,\ldots,U_{2n}$ and the vertices of each $U_i$ are listed in the order $u_1^i,\ldots,u_{s(i)}^i$. For a positive integer $p$, let $Z_p=\{v_1,\ldots,v_p\}$. Recall also that 
$V_1=W_{2-2\lceil n/2\rceil}\cup\ldots\cup W_{-2}\cup W_{0}\cup W_2\cup\ldots\cup W_{2\lceil n/2\rceil-2}$. Respectively, we define another ordering $v_1',\ldots,v_{\lceil n^2/2\rceil}'$ of the vertices of $V_1$: the sequence enuemrates consequently the vertices of $W_{2\lceil n/2\rceil-2},W_{2\lceil n/2\rceil-4},\ldots,W_{2-2\lceil n/2\rceil}$ and the vertices of each $W_i$ are listed in the order $w_1^i,\ldots,w_{t(i)}^i$.
For a positive integer $p$, we define $Z_p'=\{v_1',\ldots,v_p'\}$.

\begin{theorem}\label{thm:iso}
Let $G$ be an $(n\times n)$-grid. Then for any $Q\subseteq V_1$, it holds that $\delta(Q)\geq \min(\delta(Z_p),\delta(Z_p'))$, where $p=|Q|$.
\end{theorem}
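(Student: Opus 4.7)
The plan is to carry out a compression argument. Starting from an arbitrary $Q \subseteq V_1$ with $|Q| = p$, I will transform it through a sequence of operations, each preserving $|Q|$ and not increasing $\delta(Q)$, so as to eventually reach either $Z_p$ or $Z_p'$; this immediately implies the bound.

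The transformation proceeds in three stages. \emph{Stage~1 (pyramidalization).} Apply the down-right and down-left shiftings of Lemma~\ref{lem:down} in alternation until neither changes $Q$. The potential $\Phi_1(Q) := \sum_{(x,y) \in Q} y$ is bounded below by $|Q|$ and strictly decreases whenever a shifting actually modifies $Q$, so termination is ensured. The fixed point is precisely a pyramidal set, since simultaneous invariance under both shiftings is equivalent to pyramidality (down-right invariance gives $(x,y)\in Q \Rightarrow (x+1,y-1)\in Q$ where valid, and down-left invariance gives the symmetric conclusion). \emph{Stage~2 (one-sidedness).} With $Q$ pyramidal, compare $i_1(Q)$ and $i_2(Q)$, and apply the appropriate shifting from Lemma~\ref{lem:left-right}, obtaining a left-pyramidal or a right-pyramidal set. \emph{Stage~3 (spot consolidation).} While $Q$ has at least two left (resp. right) spots, apply Lemma~\ref{lem:terr-left} (resp. Lemma~\ref{lem:terr-right}). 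Each such application strictly decreases (resp. increases) $\Phi_3(Q) := \sum_{(x,y) \in Q} x$ — in the left case by $i_2 - (i_1 - 1) \geq 1$, since $i_1 < i_2$ whenever the two spots are distinct — and hence terminates with $Q$ having at most one left (resp. right) spot.

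The principal obstacle is verifying that the terminal set $Q^\star$ coincides with $Z_p$ (or $Z_p'$ in the right-pyramidal branch). Consider the left-pyramidal branch. By pyramidality, $Q^\star$ is invariant under the down-right shift, so $Q^\star \cap U_{2j}$ is a prefix $\{u_1^{2j}, \ldots, u_{r_{2j}}^{2j}\}$ for every $j$. A direct check of coordinates reveals that whenever $0 < r_{2j} < s(2j)$, the top vertex $u_{r_{2j}}^{2j}$ lies in the interior of the grid (neither on column~$1$ nor on row~$n$) and is therefore a left spot, while empty or completely filled anti-diagonals produce no left spot. The ``at most one spot'' condition thus restricts $Q^\star$ to have at most one partially-filled anti-diagonal. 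Next, left-justification on row~$1$ compels the fully-filled anti-diagonals to form an initial segment $U_2, U_4, \ldots, U_{2k}$: any ``skipped'' fully-filled anti-diagonal $U_{2j''}$ lying above an empty $U_{2j'}$ would contribute a row-$1$ vertex $(2j''-1,1)\in Q^\star$ that breaks the prefix structure required for $Q^\star \cap X_1$. Combining these observations pins down $Q^\star$ uniquely as $Z_p$. A symmetric analysis of the right-pyramidal branch yields $Q^\star = Z_p'$. In either case $\delta(Q) \geq \delta(Q^\star) \in \{\delta(Z_p), \delta(Z_p')\}$, which proves the theorem.
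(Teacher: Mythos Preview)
Your proof is correct and follows essentially the same compression strategy as the paper's: pyramidalize via diagonal shiftings (Lemma~\ref{lem:down}), make one-sided via Lemma~\ref{lem:left-right}, then reduce to a single spot via Lemmas~\ref{lem:terr-left}/\ref{lem:terr-right}. The only stylistic difference is that the paper phrases each stage extremally (``among all minimizers of $\delta$, pick one minimizing $\sum y$'', etc.) while you phrase it algorithmically with explicit potential functions guaranteeing termination; these viewpoints are equivalent.

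One minor remark on Stage~3: your identification of the terminal set with $Z_p$ is already more detailed than the paper's bare assertion, but the row-$1$ argument you give only rules out the configuration ``full anti-diagonal $U_{2j''}$ above an \emph{empty} $U_{2j'}$'' and tacitly assumes $2j''\leq n+1$ (otherwise $U_{2j''}$ has no row-$1$ vertex). To close the argument completely one should show directly that left-pyramidality forces $U_{2a+2}$ full $\Rightarrow U_{2a}$ full (a short check on the top vertex of $U_{2a}$); this also disposes of ``full above \emph{partial}'' and of the range $2a+2>n+1$. This is an easy patch and does not affect the overall soundness.
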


\begin{proof}
Let $p\in \{1,2,\ldots,\lceil \frac{n^2}{2}\rceil\}$ and let $\delta^*=\min\{\delta(Q)\, |\, Q\subseteq V_1,|Q|=p\}$. 

First, we show that there is a left-pyramidal or a right-pyramidal set $Q\subseteq V_1$ of size $p$ with $\delta(Q)=\delta^*$. Let $Q$ be any set of size $p$ with $\delta(Q)=\delta^*$. Assume that $Q$ is chosen in such a way that the sum of $y$-coordinates of the vertices of $Q$ is minimum. Then $Q$ is pyramidal because otherwise we could apply the down-right or down-left shifting and obtain a set $Q'$ that would have smaller sum of $y$-coordinates of its vertices, and for which it would hold that $\delta(Q')\leq \delta(Q)$ by Lemma~\ref{lem:down}. 
Suppose now that $Q$ is neither left- nor right-pyramidal. If $i_1(Q)\geq i_2(Q)$, then  let $Q'$ be the set obtained from $Q$ by the left shifting. By Lemma~\ref{lem:left-right}, we have that $\delta(Q')\leq \delta(Q)$ and $Q'$ is left-pyramidal. On the other hand, if $i_1(Q)<i_2(Q)$, then the set $Q'$  obtained from $Q$ by the right shifting is right-pyramidal and satisfies $\delta(Q')\leq \delta(Q)$ by Lemma~\ref{lem:left-right}. Since $\delta(Q)=\delta^*$ is minimum possible, in both cases we conclude that $\delta(Q')=\delta^*$.

Suppose now that there is a left pyramidal set $Q$ of size $p$ with $\delta(Q)=\delta^*$. Among all such sets we select $Q$ for which the sum of $x$-coordinates of its vertices is minimum. Then $Q$ has at most one spot, since otherwise using Lemma~\ref{lem:terr-left} we could construct a left-pyramidal set $Q'$ with $\delta(Q')\leq \delta(Q)$ and a smaller sum of $x$-coordinates of vertices. It remains to notice that if a left-pyramidal set of size $p$ has at most one spot, then in fact $Q=Z_p$.

The case when there is a right-pyramidal set $Q$ of size $p$ with $\delta(Q)=\delta^*$ is symmetric. Among all such sets we select $Q$ that maximizes the sum of $x$-coordinates of its vertices, and using Lemma~\ref{lem:terr-right} we argue that then $Q=Z_p'$. 

Summarizing, in the first case we have that $\delta^*=\delta(Z_p)$ and in the second we have that $\delta^*=\delta(Z_p')$, so we infer that $\delta^*=\min(\delta(Z_p),\delta(Z_p'))$.
\end{proof}

Using Theorem~\ref{thm:iso}, it is possible to obtain an explicit expression for the tight lower bound for $\delta(Q)$, but such an expression is rather ugly. In particular, it can be noticed that there are cases when the bound is given by $\delta(Z_p)$ and cases when $\delta(Z_p')$ is minimum. Consider, e.g., the $(6\times 6)$-grid.  Then $6=\delta(Z_4)<\delta(Z_4')=7$ and $15=\delta(Z_{12})>\delta(Z_{12}')=14$. To compute the hunter number of a grid, we need the bound for one special case.

\begin{corollary}\label{cor:bound}
Let $G$ be an $(n\times n)$-grid, where $n\geq 4$ and $n$ is even. Then for any $Q\subseteq V_1$ with $|Q|=\frac{n^2}{4}-\frac{n}{2}$, it holds that $\delta(Q)\geq \frac{n^2}{4}$.
\end{corollary}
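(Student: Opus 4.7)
The plan is to invoke Theorem~\ref{thm:iso}: since $\delta(Q) \geq \min(\delta(Z_p), \delta(Z_p'))$ for $p = |Q| = \tfrac{n^2}{4} - \tfrac{n}{2}$, it suffices to verify both $\delta(Z_p) \geq \tfrac{n^2}{4}$ and $\delta(Z_p') \geq \tfrac{n^2}{4}$. The two extremal sets $Z_p$ and $Z_p'$ for this particular $p$ admit very clean diagonal descriptions, so I expect to compute both quantities exactly and find they each equal $\tfrac{n^2}{4}$.

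For $Z_p$: since $|U_2| + |U_4| + \cdots + |U_{n-2}| = 1 + 3 + \cdots + (n-3) = \left(\tfrac{n}{2}-1\right)^2 = p - \left(\tfrac{n}{2}-1\right)$, the set $Z_p$ consists of all of $U_2, \ldots, U_{n-2}$ together with the first $\tfrac{n}{2}-1$ vertices of $U_n$, namely $(n-1,1), (n-2, 2), \ldots, \left(\tfrac{n}{2}+1, \tfrac{n}{2}-1\right)$. From this description $Z_p$ is manifestly pyramidal, and moreover $Z_p \cap X_n = \emptyset$ while $X_1 \subseteq Z_p$: the odd-$x$ vertices $(1,1), (3,1), \ldots, (n-3, 1)$ all sit in the low diagonals, and $(n-1,1)$ is the first vertex of $U_n$. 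Since $N(X_1) \cap \overline{X}_1 = \overline{X}_1$ (every even-$x$ vertex in row $1$ is horizontally adjacent to an odd-$x$ one), Lemma~\ref{lem:pyr-formula} yields $\delta(Z_p) = p - 0 + \tfrac{n}{2} = \tfrac{n^2}{4}$.

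For $Z_p'$ the key arithmetic coincidence is $|W_2| + |W_4| + \cdots + |W_{n-2}| = 2 + 4 + \cdots + (n-2) = \left(\tfrac{n}{2}-1\right) \cdot \tfrac{n}{2} = p$, so $Z_p'$ equals the union $W_2 \cup W_4 \cup \cdots \cup W_{n-2}$ exactly. I would then bound $\delta(Z_p')$ directly: any $V_2$-vertex lies in some $W_d$ with $d$ odd, and each of its grid neighbors lies in $W_{d-1} \cup W_{d+1}$; hence only the diagonals $W_1, W_3, \ldots, W_{n-1}$, of sizes $n-1, n-3, \ldots, 1$, can contribute to $N(Z_p')$. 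A brief direct check, essentially the boundary case $d = 1$, shows that every vertex of these diagonals has some neighbor in $Z_p'$, giving $\delta(Z_p') = (n-1) + (n-3) + \cdots + 1 = \tfrac{n^2}{4}$.

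Combining, Theorem~\ref{thm:iso} yields $\delta(Q) \geq \tfrac{n^2}{4}$, as required. None of the steps is genuinely hard once the diagonal decompositions are unwound; the only place requiring a little care is the $d = 1$ case above, where one must check that each vertex $(y+1, y) \in W_1$ has at least one grid-neighbor in $W_2$ rather than only in the excluded diagonal $W_0$. The clean splitting of $p$ as both $\left(\tfrac{n}{2}-1\right)^2 + \left(\tfrac{n}{2}-1\right)$ and $\left(\tfrac{n}{2}-1\right)\cdot \tfrac{n}{2}$ is really what makes the two computations go through simultaneously.
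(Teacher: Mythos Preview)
Your proof is correct and follows essentially the same approach as the paper: invoke Theorem~\ref{thm:iso}, identify $Z_p$ and $Z_p'$ explicitly for $p=\tfrac{n^2}{4}-\tfrac{n}{2}$, and compute each neighborhood size to be exactly $\tfrac{n^2}{4}$. The only cosmetic difference is that for $\delta(Z_p)$ you appeal to Lemma~\ref{lem:pyr-formula}, whereas the paper counts $N(Z_p)$ directly as $U_3\cup\cdots\cup U_{n-1}$ together with $\tfrac{n}{2}$ vertices of $U_{n+1}$; the two computations are equivalent.
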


\begin{proof}
Let $p=\frac{n^2}{4}-\frac{n}{2}$.
Observe that $Z_p$ contains all the vertices of $U_2,\ldots,U_{n-2}$ and $\frac{n}{2}-1$ vertices of $U_n$. 
Then $N(Z_p)$ contains all the vertices of $U_3,\ldots,U_{n-1}$ and $\frac{n}{2}$ vertices of $U_{n+1}$. 
Because $s(3)+\ldots+s(n-1)=2+\ldots+(n-2)=\frac{n^2}{4}-\frac{n}{2}$, we have that $\delta(Z_p)= \frac{n^2}{4}$.
For the set $Z_p'$, we have that $Z_p'=W_{n-2}\cup\ldots \cup W_2$ and $N(Z_p')=W_{n-1}\cup\ldots \cup W_1$. Hence, it follows that $\delta(Z_p')= \frac{n^2}{4}$.
By Theorem~\ref{thm:iso}, $\delta(Q)\geq \min(\delta(Z_p),\delta(Z_p'))=\frac{n^2}{4}$.
\end{proof}

\subsection{The hunter number of a grid}\label{sec:hunt-grid}
Now we are ready to compute the hunter number of a grid.

\begin{theorem}\label{thm:hunt-grid}
Let $G$ be an $(n\times m)$-grid. Then $h(G)=\lfloor \frac{\min\{n,m\}}{2}\rfloor+1$.
\end{theorem}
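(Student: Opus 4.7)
Plan:

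The theorem asserts $h(G) = \lfloor n/2 \rfloor + 1$ when $n \leq m$. I split the proof into the upper bound (via an explicit strategy) and the lower bound (via isoperimetry).

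For the lower bound, by Proposition~\ref{prop:subgr} it suffices to establish $h(G') \geq \lfloor n/2 \rfloor + 1$ on the $(n \times n)$-subgrid $G'$ of $G$. The central case is $n$ even, $n \geq 4$. Suppose for contradiction that $k = n/2$ hunters have a winning strategy; by Lemma~\ref{lem:bip} they have a winning strategy $\mathcal{H} = (H_1, H_2, \ldots)$ with respect to $V_1$. Define the contaminated-set sequence $X_0 = V_1$, $X_{i+1} = \Phi(X_i, H_{i+1}) = N(X_i \setminus H_{i+1})$; bipartiteness forces $X_i \subseteq V_1$ for even $i$ and $X_i \subseteq V_2$ for odd $i$. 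Since $n$ is even, the reflection $(x, y) \mapsto (n+1-x, y)$ is a graph automorphism of $G'$ swapping $V_1$ and $V_2$, so Corollary~\ref{cor:bound} applies to subsets of either side of the bipartition. I claim by induction that $|X_i| \geq n^2/4$ for all $i$, which contradicts that $\mathcal{H}$ is winning. The base case is $|X_0| = n^2/2$; for the inductive step, $|X_i \setminus H_{i+1}| \geq n^2/4 - n/2$, so picking any $Q \subseteq X_i \setminus H_{i+1}$ of size exactly $n^2/4 - n/2$, Corollary~\ref{cor:bound} gives $|N(Q)| \geq n^2/4$. Since in a bipartite graph $N$ is monotone on subsets of one side, $N(Q) \subseteq N(X_i \setminus H_{i+1}) = X_{i+1}$, hence $|X_{i+1}| \geq n^2/4$.

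For $n$ odd, $\lfloor n/2 \rfloor + 1 = (n+1)/2$; the $((n-1) \times (n-1))$-subgrid with $n - 1$ even, together with Proposition~\ref{prop:subgr}, gives $h(G') \geq (n-1)/2 + 1 = \lfloor n/2 \rfloor + 1$. The tiny cases $n \in \{1, 2\}$ not covered by Corollary~\ref{cor:bound} are checked directly: one hunter wins on any path of length at least three, while two hunters are necessary and sufficient on $C_4$, which is the $(2 \times 2)$-grid.

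For the upper bound, I construct a winning strategy using $\lfloor n/2 \rfloor + 1$ hunters, by a column-by-column sweep analogous to the classical path strategy of shooting $2, 3, \ldots, n-1$ (possibly repeated to handle parity). At each round, all hunters are placed on a single focal column; within a column, the shot positions follow a fixed row-parity pattern, and the focal column advances, with a second reverse sweep if needed for parity. A careful analysis of how the contaminated set evolves shows that after the sweep nothing remains contaminated, the extra hunter above $\lfloor n/2 \rfloor$ being crucial to prevent the rabbit from hopping across row parities and evading the front of the sweep. The main obstacle is the correctness proof of this sweeping strategy; by contrast, the lower bound is a clean application of Corollary~\ref{cor:bound} once Lemma~\ref{lem:bip} reduces the question to one side of the bipartition.
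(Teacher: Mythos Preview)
Your lower-bound argument is correct and essentially identical to the paper's: reduce to the even square grid via Proposition~\ref{prop:subgr}, then use Corollary~\ref{cor:bound} (applied symmetrically to $V_1$ and $V_2$ via the automorphism $(x,y)\mapsto(n+1-x,y)$) to show the contaminated set can never shrink below $n^2/4$. The paper tracks both bipartition classes simultaneously rather than first invoking Lemma~\ref{lem:bip}, but this is cosmetic.

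The upper bound, however, has a genuine gap. You commit to a strategy in which ``all hunters are placed on a single focal column'' with a ``fixed row-parity pattern,'' but such strategies fail already on the $3\times 3$ grid with two hunters. If the two hunters always occupy rows $1$ and $3$ of their column, the rabbit simply lives forever in row $2$, oscillating between two adjacent columns; and if the hunters alternate row parities while advancing the column by one, a direct computation shows the contaminated set returns to all of $V_1$ after two rounds. The paper's working strategy is substantially more intricate: for each column index $i$ it performs a block of roughly $n$ rounds in which the hunters are split between the \emph{two} adjacent columns $i$ and $i+1$, gradually migrating from the first to the second while alternating row parities, and the invariant maintained is that after block $i$ all vertices in columns $1,\ldots,i$ of the relevant bipartition class are clear. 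You yourself flag the correctness of the sweep as ``the main obstacle'' and do not supply it; since the single-column strategy you sketch is in fact incorrect, this is not a routine detail to be filled in but the heart of the upper-bound proof.
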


\begin{proof}
Recall that we assume that $n\leq m$.

 First, we prove that $h(G)\leq \lfloor \frac{n}{2}\rfloor+1$. By Proposition~\ref{prop:subgr}, it is sufficient to show it for odd $n$. Therefore, we assume that $n$ is odd and, using Lemma~\ref{lem:bip},  construct a winning strategy for $\frac{n-1}{2}+1$ hunters with respect to $V_1$.  Consecutively, for  $i=1,\ldots,m-1$, the hunter player makes the following sequence of shoots as it is shown in Fig.~\ref{fig:shoots}:
\begin{itemize}
\item shoot at $(i,1), (i,3),\ldots,(i,n)$,
\item for $j=1,\ldots,(n+1)/2$, shoot at $(i+1,1),(i+1,3)\ldots,(i+1,2j-1),(i,2j),\ldots,(i,n-1)$ and then
at $(i+1,2),\ldots,(i+1,2j),(i,2j+1),\ldots,(i,n)$. 
\end{itemize}
Finally, the hunter player shoots at $(m,1), (m,3),\ldots,(m,n)$.
It is straightforward to verify (see Fig.~\ref{fig:shoots}) that the following claim holds for each $i\in\{1,\ldots,m\}$:
after the $i$-th series of rounds,
\begin{itemize}
\item the vertices of $V_{2-i\bmod 2}$ are clear;
\item the vertices $(x,y)$ for $x>i$ are clear.
\end{itemize}
This immediately implies that we have a winning hunters' strategy.

\begin{figure}[ht]
\centering\scalebox{0.75}{\input{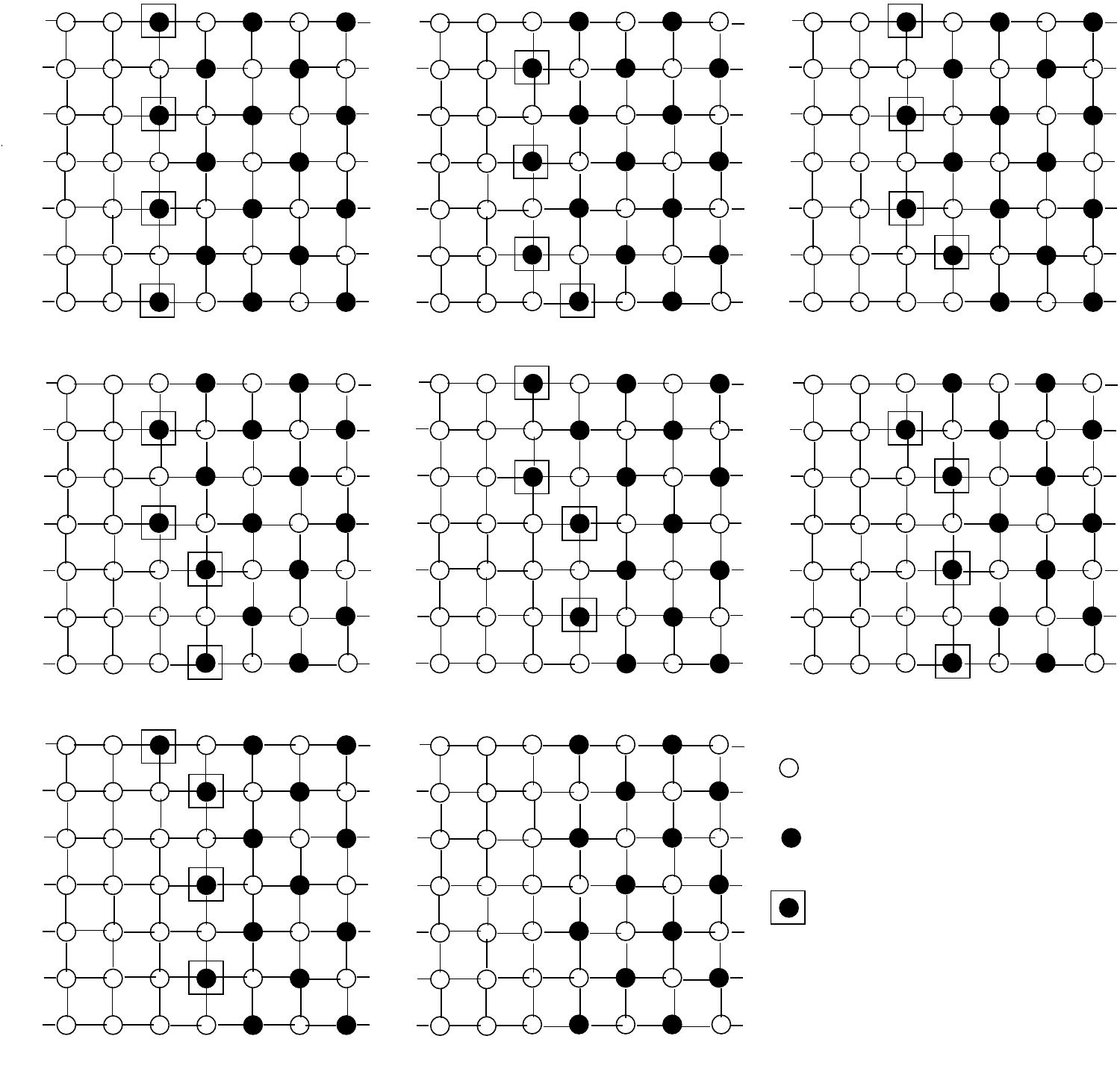_t}}
\caption{The series of shots for each $i\in\{1,\ldots,m-1\}$ for $n=7$.
\label{fig:shoots}}
\end{figure}

Now we show that $h(G)\geq \lfloor \frac{n}{2}\rfloor+1$. By Proposition~\ref{prop:subgr}, it is sufficient to show it for $(n\times n)$-grids for even $n$. If $n=2$, then a direct check shows that $h(G)=2$ and the claim holds. Suppose then that $n\geq 4$. We show that the Hunter player has no winning strategy for $\frac{n}{2}$ hunters. 

For the sake of contradiction, suppose $\mathcal{H}=(H_1,H_2,\ldots )$ is Hunter's strategy for $\frac{n}{2}$ hunters. We show inductively that for every $i\geq 1$, each of the sets $V_1$ and $V_2$ has at least $\frac{n^2}{4}$ contaminated vertices after the $i$-th shot. Clearly, all vertices are contaminated before the first shoot. Assume that the claim holds before $i$-th round. Set $V_1$ contains at least $\frac{n^2}{4}$ contaminated vertices before the $i$-th shot; let us denote this set of contaminated vertices by $A_{i-1}$. As $|H_i|\leq \frac{n}{2}$, we have a set $Q=A_{i-1}\setminus H_i$ of at least $\frac{n^2}{4}-\frac{n}{2}$ vertices that were contaminated before the $i$-th shot and were not shot during the $i$-th round. By applying Corollary~\ref{cor:bound} to any subset of $Q$ of size exactly $\frac{n^2}{4}-\frac{n}{2}$, we infer that $\delta(Q)\geq \frac{n^2}{4}$ and hence at least $\frac{n^2}{4}$ vertices of $V_2$ are contaminated after the $i$-th shot. To show the symmetric claim for $V_2$, we can use exactly the same arguments, because we can apply Corollary~\ref{cor:bound} also to subsets of $V_2$; this follows from the assumption that $n$ is even and, therefore, $V_1$ and $V_2$ can be mapped to each other by an automorphism of $G$. 

This is a contradiction with the assumption that $\mathcal{H}$ is a winning strategy for the Hunter player. As $\mathcal{H}$ was chosen arbitrarily, we have that $\frac{n}{2}$ hunters cannot hunt the rabbit.
\end{proof}

\section{Hunting rabbit on a tree}\label{sec:tree}
In this section we provide upper and lower bounds on the hunting number of a tree. 

\subsection{Upper bound}
For the  upper bound we show that   the hunting number of a graph does not exceed its pathwidth plus one. Then 
the bound will follow from the well-known bound on the pathwidth of a tree. 

A \emph{path decomposition} of a graph $G$ is a sequence $(X_1,\ldots,X_{\ell})$ of subsets
of $V(G)$ (called {\em bags})
such that 
\begin{itemize}
\item[i)] $\bigcup_{i \in V(T)} X_{i} = V(G)$,
\item[ii)] for each edge $xy \in E(G)$, $x,y\in X_i$ for some  $i\in V(T)$, and
\item[iii)] for each $x\in V(G)$, if $x\in X_i\cap X_j$ for some $1\leq i\leq j\leq \ell$, then 
$x\in X_k$ for all $k$ with $i\leq k\leq j$. 
\end{itemize}
The \emph{width} of a path decomposition $(X_1,\ldots,X_{\ell})$ is 
$\max\{|X_i|\mid 1\leq i\leq \ell\}-1$. The \emph{pathwidth} of a graph $G$ (denoted as $\pw(G)$) is the minimum width over all path decompositions of $G$.

\begin{proposition}\label{prop:pw}
For a graph $G$ it holds that $h(G)\leq \pw(G)+1$, and this bound is tight for graphs of pathwidth at least 2.
\end{proposition}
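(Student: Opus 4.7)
The plan proceeds in two stages. For the upper bound $h(G)\le\pw(G)+1$, I would use an optimal path decomposition of $G$ to design a very simple hunter strategy: in round~$i$, the $w+1$ hunters shoot exactly the vertices of the $i$-th bag $X_i$, where $w=\pw(G)$. For the tightness claim, I would exhibit a specific graph $G$ with $\pw(G)\ge 2$ on which $\pw(G)+1$ hunters are genuinely necessary.

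For the upper bound, let $(X_1,X_2,\ldots,X_\ell)$ be a path decomposition of $G$ of width~$w$, so $|X_i|\le w+1$ for every~$i$. I would prove by induction on $i\in\{0,1,\ldots,\ell\}$ the following invariant on the contamination set:
\[
C_i\ \subseteq\ V_{>i}\ :=\ X_{i+1}\cup X_{i+2}\cup\cdots\cup X_\ell,
\]
with the convention $V_{>\ell}=\emptyset$. The base case $C_0=V(G)=V_{>0}$ is immediate. For the inductive step, the surviving contaminated vertices after the shot at round~$i$ lie in $C_{i-1}\setminus X_i\subseteq V_{>i}\setminus X_i$. For any such $v$, $v$ belongs to some bag $X_j$ with $j>i$ but not to $X_i$; the consecutiveness property~(iii) of path decompositions then forces $v\notin X_k$ for every $k\le i$ as well. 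Hence every bag containing~$v$ has index strictly greater than~$i$, and thus every neighbor of~$v$---which must share some bag with~$v$ by property~(ii)---lies in $V_{>i}$. Taking the union of $N(v)$ over all surviving~$v$ gives $C_i\subseteq V_{>i}$. Applied at $i=\ell$, the invariant yields $C_\ell=\emptyset$, so the rabbit is necessarily caught by round~$\ell$.

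For the tightness, I would exhibit, for suitable $k\ge 2$, a graph $G_k$ with $\pw(G_k)=k$ and $h(G_k)\ge k+1$. A clean approach is to engineer $G_k$ so that for every hunter set $H$ with $|H|\le k$, the image $\bigcup_{v\in V(G_k)\setminus H}N(v)$ equals all of $V(G_k)$; against any such strategy the rabbit can preserve $C_i=V(G_k)$ forever, proving $h(G_k)\ge k+1$. A direct case analysis on how $H$ intersects the distinguished parts of $G_k$ then completes the argument, which combined with the upper bound gives equality.

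The main technical content is the inductive step of the upper bound: spotting the invariant $C_i\subseteq V_{>i}$ and then using property~(iii) of path decompositions to confine the rabbit's movement to the ``right'' of the current bag. Once this invariant is written down the proof is essentially routine, whereas the tightness construction is more delicate, as one must simultaneously realise a prescribed small pathwidth and a neighborhood-saturation property robust to any $\pw(G)$-sized shot.
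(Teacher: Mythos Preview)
Your upper bound argument is correct and essentially the same as the paper's: you both shoot at the bags in order and track an invariant (you track the contaminated set $C_i\subseteq V_{>i}$, the paper tracks the complementary clear set $(\bigcup_{j\le i}X_j)\setminus X_{i+1}$), and the key step in both is that property~(iii) of a path decomposition pushes any surviving vertex, together with all its neighbors, into bags of index~$>i$.

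Your tightness plan, however, cannot work as stated. You propose to build $G_k$ with $\pw(G_k)=k$ such that for every $H$ with $|H|\le k$ one has $\bigcup_{v\in V(G_k)\setminus H}N(v)=V(G_k)$. Unwinding this, it says that every vertex $w$ has a neighbor outside every $k$-element set $H$, i.e.\ $|N(w)|\ge k+1$; so the minimum degree of $G_k$ would be at least $k+1$. But any graph of pathwidth~$k$ is $k$-degenerate: in an optimal path decomposition (with no bag contained in its neighbor) there is a vertex appearing only in $X_1$, and that vertex has degree at most $|X_1|-1\le k$. Hence no graph with $\pw(G_k)=k$ can satisfy your neighborhood-saturation condition, and the invariant ``$C_i=V(G_k)$ forever'' is unattainable.

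The paper's tightness proof therefore has to work harder: it exhibits a concrete $6$-vertex graph $G$ with $\pw(G)=2$ and maintains a \emph{weaker} invariant---at least five vertices stay contaminated, and the possibly-clear vertex is constrained to lie in a specific $4$-element subset---verified by a short case analysis on the shot $H_i$. For general $k\ge 2$ it then cones off $G$ with a $(k-2)$-clique and shows the analogous invariant persists. The essential point you are missing is that a low-degree vertex can legitimately become clear, so the rabbit's survival argument must tolerate (and control) a bounded amount of cleaning rather than forbid it entirely.
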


\begin{proof}
Let $(X_1,\ldots,X_{\ell})$ be a path decomposition of $G$ of width $k=\pw(G)$. 
We show that $\mathcal{H}=(X_1,\ldots,X_{\ell})$ is a winning hunters' strategy for  $k+1$ hunters.
To prove this, we show that all the vertices of $(\cup_{j=1}^iX_j)\setminus X_{i+1}$ are clear after the $i$-th round, for all $i\in\{1,\ldots,\ell\}$; we assume here that $X_{\ell+1}=\emptyset$.  
It is straightforward to see that the claim holds for $i=1$, because a vertex $v\in X_1$ can have a neighbor outside $X_1$ only if $v\in X_1\cap X_2$, by the conditions ii) and iii) of the definition of a path decomposition. Assume that the claim holds for some $i\in\{1,\ldots,\ell\}$. If a vertex $v\in\cup_{j=1}^{i+1}X_j$ is contaminated after the $i+1$-st round, then $v$ has to be adjacent to a vertex $u$ that was contaminated after the $i$-th round, and moreover $u\notin X_{i+1}$. By the inductive assumption we infer that $u\notin\cup_{j=1}^{i+1}X_j$, and hence by ii) and iii) of the definition of a path decomposition it follows that $v\in X_{i+1}\cap X_{i+2}$. Thus, no vertex of $(\cup_{j=1}^{i+1}X_j)\setminus X_{i+2}$ is contaminated after the $i+1$-st round, which proves the induction step. It remains to observe that after the $\ell$-th round all the vertices of $G$ are clear. This means that the strategy $\mathcal{H}$ is winning.

\begin{figure}[ht]
\centering\scalebox{0.75}{\input{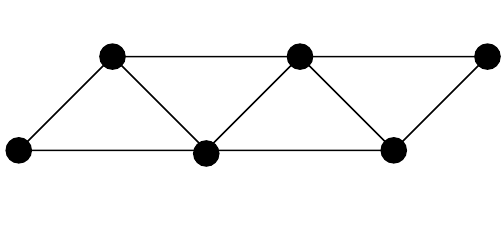_t}}
\caption{An example of a graph $G$ with $h(G)=\pw(G)+1=3$. 
\label{fig:pw}}
\end{figure}

Now we show tightness of the bound. First, we prove that the bound is tight for graphs of pathwidth 2.

Consider the graph $G$ shown in Fig.~\ref{fig:pw}. It is straightforward to verify that $\pw(G)=2$. We show that $h(G)\geq 3$.  
Consider an arbitrary hunters' strategy $\mathcal {H}=(H_1,H_2,\ldots)$ for 2 hunters. 
We prove that $\mathcal{H}$ cannot be a winning strategy by showing that for any $i\geq 1$, after the $i$-th round the following invariant holds: at least $5$ vertices of $G$ are contaminated, and the only vertices that can be clear are $v_1,v_3,v_4,v_6$. We shall denote this invariant by $(\diamondsuit)$.

Clearly, all the vertices are contaminated in the beginning, so $(\diamondsuit)$ holds before round $1$. Suppose now that $(\diamondsuit)$ is satisfied before the $i$-th round and we show that the same holds after the round. By symmetry and monotonicity under containment, it is sufficient to consider two cases.

\medskip
\noindent
{\bf Case 1.} Vertex $v_1$ is clear and all other vertices are contaminated before the $i$-th round. Notice that each of $v_2,v_4,v_5$ has at least 3 contaminated neighbors before the $i$-th round. Hence, they are contaminated after the round as well. If $H_i=\{v_4,v_5\}$, then $v_1,\ldots,v_5$ are contaminated after the round and we have $(\diamondsuit)$. Assume then that $H_i\neq\{v_4,v_5\}$. Then $v_6$ is contaminated after the round. If $H_i=\{v_2,v_5\}$, then 
$v_1$ is contaminated after the round and we have $(\diamondsuit)$, because $v_1,v_2,v_4,v_5,v_6$ are contaminated after the round. If 
$H_i\neq\{v_2,v_5\}$, then 
$v_3$ gets contaminated and we have $(\diamondsuit)$, because $v_2,v_3,v_4,v_5,v_6$ are contaminated after the round.
 
\medskip
\noindent
{\bf Case 2.} Vertex $v_3$ is clear and all other vertices are contaminated before the $i$-th round. Notice that each of $v_2,v_3,v_4,v_5$ has at least 3 contaminated neighbors before the $i$-th round. Hence, they are contaminated after the round.  
If $H_i=\{v_4,v_5\}$, then $v_1,\ldots,v_5$ get contaminated and we have $(\diamondsuit)$ after the round. 
If $H_i\neq\{v_4,v_5\}$, then $v_2,\ldots,v_6$ get contaminated
and we again have $(\diamondsuit)$ after the round.

\medskip
To show tightness of the bound for graphs of pathwidth $k\geq 2$, consider the graph $G'$ obtained from the graph $G$ shown in Fig.~\ref{fig:pw} as follows. 
We add a set $X$ of $k-2$ vertices and join them pairwise by edges to form a clique. Then every vertex of $X$ is joined by an edge with every vertex of $G$. 
It is straightforward to see that $\pw(G')=k$. 
We show that $h(G')=k+1$. 
Let $\mathcal {H}=(H_1,H_2,\ldots)$ be an arbitrary hunters' strategy for $k$ hunters. We prove that $\mathcal{H}$ is not a winning strategy by showing that for any $i\geq 1$, after the $i$-th round the following invariant $(\diamondsuit\diamondsuit)$ holds:  the invariant $(\diamondsuit)$ if fulfilled for the vertices of $G$ and the vertices of $X$ are contaminated.

As all the vertices are contaminated in the beginning, $(\diamondsuit\diamondsuit)$  holds before round $1$. Suppose that $(\diamondsuit\diamondsuit)$ is satisfied before the $i$-th round and we show that the same holds after the round. 

If $X\subseteq H_i$, then at most 2 hunters shoot at the vertices of $G$ and, therefore, $(\diamondsuit)$ holds for $G$ as it was shown above.  Also in this case all the verties of $X$ are contaminated after the $i$-th round, because there is at least one contaminated before $i$-the round vertex $u$ of $G$ such that $u\notin H_i$.  We conclude that $(\diamondsuit\diamondsuit)$ is fulfilled.  

Suppose that $|X\setminus H_i|=1$. Since the vertices of $X\setminus H_i$ are contaminated before the $i$-th round, all the vertices of $G$ are contaminated after $i$-th round. Since at most 3 hunters shoot at the vertices of $G$ and $G$ has at least 5 contaminated vertices before the $i$-th round, the vertices of $X$ are contaminated after the round. Hence, $(\diamondsuit\diamondsuit)$ holds.

Finally, assume that $|X\setminus H_i|\geq 2$ and consider distinct $x,y\in X\setminus H_i$. As $x$ is contaminated before the $i$-round, we have that all the vertices of $G$ are contaminated after $i$-th round. It remains to observe that all the vertices of $X\setminus\{x\}$  are contaminated after the $i$-th round, because $x$ is contaminated before the round, and $x$ is contaminated, because $y$ is contaminated before the $i$-th round. We again have that $(\diamondsuit\diamondsuit)$ holds.
\end{proof}

We proved that the bound is tight for graphs of pathwidth at least 2. It can be noticed that if $\pw(G)=1$, then $h(G)=1$, because every component of $G$ is a caterpillar in this case,  and as it was shown in \cite{BritnellW13}, in this case  $h(G)=1$.

Since the pathwidth of an $n$-vertex tree is bounded by $\Oh(\log n)$
\cite{Parsons78,Petrov82}, we obtain the following theorem.
\begin{theorem} For every $n$-vertex tree $T$,  $\hunt(T)\leq \Oh(\log n)$. 
\end{theorem}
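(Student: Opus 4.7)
The proof is essentially a direct consequence of two ingredients that are already either established or cited in the paper, so the plan is short. First, I would invoke Proposition~\ref{prop:pw}, which gives $h(G)\leq \pw(G)+1$ for any graph $G$; applied to a tree $T$ this yields $h(T)\leq \pw(T)+1$. Second, I would invoke the classical theorem of Parsons~\cite{Parsons78} and Petrov~\cite{Petrov82} (a.k.a.\ the $O(\log n)$ bound on node-search number of trees, equivalent to pathwidth via standard translations) stating that every $n$-vertex tree has pathwidth $O(\log n)$. Chaining the two inequalities gives $h(T)\leq O(\log n)+1=O(\log n)$.

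For completeness I would mention, without full details, why the $\pw(T)=O(\log n)$ bound holds: one uses the fact that every tree $T$ admits a centroid vertex $c$ whose removal splits $T$ into subtrees each of size at most $|V(T)|/2$. Recursively constructing path decompositions of the subtrees, and concatenating them with the centroid $c$ (and its neighbors into each subtree) placed in every bag of the respective subtree's decomposition, one obtains a path decomposition of $T$ whose width satisfies $\pw(T)\leq \pw(T')+1$ where $T'$ is the largest subtree. This recurrence unfolds to $\pw(T)\leq \lceil \log_2 n\rceil$.

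The main (and essentially the only) obstacle here is that the bound on pathwidth of trees is borrowed from external literature rather than reproved; no new idea is needed beyond citing it. Combined with Proposition~\ref{prop:pw}, the theorem follows immediately, so the ``proof'' will be essentially a one-sentence derivation:
\[
h(T)\;\leq\;\pw(T)+1\;\leq\;O(\log n).
\]
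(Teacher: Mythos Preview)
Your proposal is correct and is essentially identical to the paper's own argument: the paper derives the theorem in one sentence by combining Proposition~\ref{prop:pw} with the cited $O(\log n)$ bound on the pathwidth of trees from~\cite{Parsons78,Petrov82}. Your added centroid-decomposition sketch for $\pw(T)=O(\log n)$ goes slightly beyond what the paper spells out, but the overall route is the same.
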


\subsection{Lower bound}
In this section we prove that the hunting number of an $n$-vertex tree can be as large as $\Omega(\log n/\log \log n)$. More precisely, we prove the following theorem.

\begin{theorem}\label{thm:trees-lb}
For every positive integer $k$ there exists a tree $T_k$ such that $|V(T_k)|=2^{\Oh(k\log k)}$ and $\hunt(T_k)\geq k$.
\end{theorem}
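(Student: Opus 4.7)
The plan is to construct $T_k$ recursively and prove $h(T_k) \geq k$ by induction on $k$. Let $T_1$ be a single edge with one endpoint designated as the \emph{root} $r_1$. For $k \geq 2$, pick $N_k$ to be a polynomial in $k$ (say $N_k = 2k$), take $N_k$ pairwise disjoint copies $C^{(1)}, \ldots, C^{(N_k)}$ of $T_{k-1}$ with respective roots $r^{(1)}, \ldots, r^{(N_k)}$, add a new vertex $r_k$ (the root of $T_k$), and join $r_k$ to each $r^{(j)}$ by an edge. Since $|T_k| = 1 + N_k \cdot |T_{k-1}|$, for $N_k = \Theta(k)$ this recurrence yields $|T_k| = 2^{O(k \log k)}$, as required.

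The base case $k = 1$ is immediate. For the inductive step, assume $h(T_{k-1}) \geq k-1$ and suppose for contradiction that $\mathcal{H} = (H_1, \ldots, H_L)$ is a winning $(k-1)$-hunter strategy on $T_k$; by Proposition~\ref{prop:back} we may take $L$ finite. The goal is to extract from $\mathcal{H}$ a winning $(k-2)$-hunter strategy on some copy $C^{(j)} \cong T_{k-1}$, which would contradict the inductive hypothesis via Proposition~\ref{prop:subgr}.

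To this end, for each copy $C^{(j)}$ consider the \emph{projected} strategy $\mathcal{H}^{(j)} = (H_1 \cap V(C^{(j)}), \ldots, H_L \cap V(C^{(j)}))$. Since $|H_i| \leq k-1$ in every round, at most one copy can contain $k-1$ hunters simultaneously, and if $r_k \in H_i$ then no copy receives more than $k-2$ hunters. With $N_k$ chosen appropriately, a pigeonhole argument over rounds and copies would yield a copy $C^{(j)}$ that never carries more than $k-2$ hunters in any single round, so $\mathcal{H}^{(j)}$ is a $(k-2)$-hunter strategy on $T_{k-1}$.

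The main obstacle is showing that $\mathcal{H}^{(j)}$ is itself \emph{winning} on $C^{(j)}$: the original strategy $\mathcal{H}$ may rely crucially on shots at the global root $r_k$ to block contamination leaking between copies, so if those shots are stripped away the projected strategy might fail against a rabbit that stays inside $C^{(j)}$. To handle this I would strengthen the inductive hypothesis to an \emph{augmented} game on $T_{k-1}$, in which the rabbit is allowed to reappear at the root $r_{k-1}$ at any round (modelling, in $T_k$, the rabbit's re-entry into $C^{(j)}$ through $r_k$). Establishing this stronger statement by the same inductive scheme, and calibrating $N_k$ so that the pigeonhole step produces a copy on which the augmented-game version of $\mathcal{H}^{(j)}$ is applicable, is where I expect the technical heart of the argument to lie.
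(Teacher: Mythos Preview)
Your recursive construction and the inductive plan are in the right spirit, but the pigeonhole step is a genuine gap. You want a copy $C^{(j)}$ that \emph{never} receives all $k-1$ hunters; however, the strategy length $L$ is bounded only by $2^{|V(T_k)|}$ (Proposition~\ref{prop:back}), which is doubly exponential in $k$, while $N_k$ is polynomial in $k$. The hunters may simply cycle, placing all $k-1$ shots in copy $i \bmod N_k$ at round $i$, so every copy gets hit with full force many times. No polynomial choice of $N_k$ rescues this, and your ``augmented game'' fix does not address it: that idea concerns recontamination through the root, not the number of hunters that land in a copy, so it still will not produce a $(k-2)$-hunter projected strategy.

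The paper circumvents exactly this obstacle by abandoning the search for a permanently clean copy. Instead it uses a \emph{two-level} branching (root $u$, children $v^i$, and below each $v^i$ a further $p$ copies $Q^{i,j}$ of $T_{k-1}$, with $p=\Theta(k^2)$) and a dynamic invariant: a subtree $Q^{i,j}$ is \emph{well-contaminated} if, since the last moment it was \emph{full}, not all $k-1$ hunters shot into it. At most one $Q^{i,j}$ can lose this status per round, but untouched contaminated subtrees refill completely in $O(k)$ rounds (the diameter of $T_{k-1}$), so in any window of $O(k)$ rounds only $O(k^2)$ subtrees are touched and the rest become full again. This recycling is what replaces your global pigeonhole, and the extra layer of $v^i$'s is what lets one propagate contamination back down to refill the $Q^{i,j}$'s. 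Tracking the last moment the whole tree is ``heavily contaminated'' then yields the contradiction. If you want to salvage your outline, the missing idea is precisely this: do not look for a copy that is always lightly hit, but argue that many copies keep becoming full again, so the hunters can never drive the global contamination measure down.
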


The rest of this section is devoted to the proof of Theorem~\ref{thm:trees-lb}. The construction of the sequence of trees $(T_k)_{k=1,2,3,\ldots}$ is inductive. We are going to think of each $T_i$ as of a rooted tree. For $T_1$ we take simply a path on three vertices with the middle vertex being the root. Let us define
$$p(k)=2\cdot ((4k-3)(k-1)+1).$$
To construct $T_k$ based on $T_{k-1}$, perform the following:
\begin{itemize}
\item Create the root $u$.
\item Add $p:=p(k)$ children of $u$, denoted by $v^1,v^2,\ldots,v^{p}$.
\item For every child $v^i$ of $u$, add $p$ subtrees $Q^{i,j}$ for $j=1,2,\ldots,p$, all isomorphic to $T_{k-1}$ and with roots being children of $v^i$.
\end{itemize}
See Figure~\ref{fig:tk} for an illustration. For $i=1,2,\ldots,p$, by $P^i$ we denote the subtree of $T_k$ rooted at $v^i$. Furthermore, let $w^{i,j}$ be the root of subtree $Q^{i,j}$, for all $i,j\in \{1,2\ldots,p\}$. By somehow abusing the notation, we will identify each subtree $P^i$ and $Q^{i,j}$ with its vertex set.

\begin{figure}[t]
                \centering
                \def\svgwidth{\textwidth}
                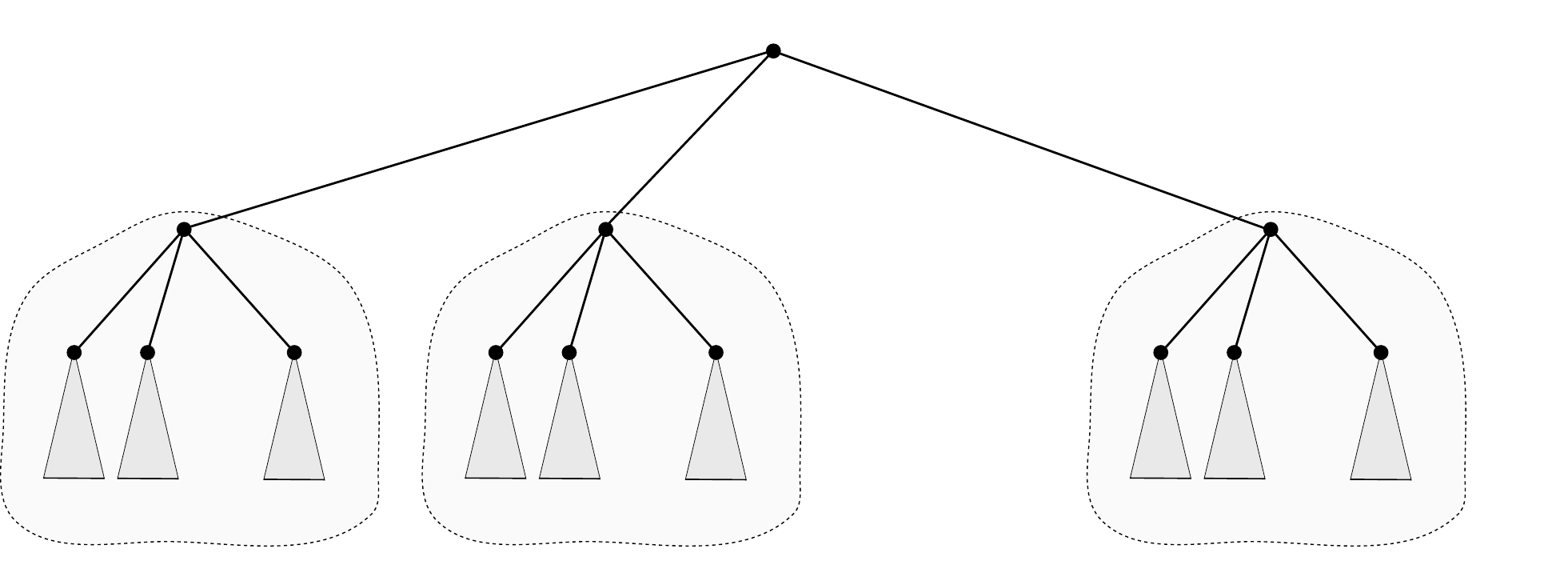
\caption{Construction of tree $T_k$.}\label{fig:tk}
\end{figure}

Observe now that we have recursive equation $|V(T_k)|=1+p(k)+p(k)^2\cdot |V(T_{k-1})|$, from which it immediately follows that $|V(T_k)|=2^{\Oh(k\log k)}$. We are left with proving by induction that $\hunt(T_k)\geq k$ for all positive integers $k$. For $k=1$ we have $\hunt(T_1)=1$, so we proceed to the inductive step for $k\geq 2$. In the following, we denote $T=T_k$.

Let us fix the bipartition $(V_1,V_2)$ of $T$ such that $u\in V_1$ and $\{v^1,v^2,\ldots,v^p\}\subseteq V_2$. We shall prove that $k-1$ hunters do not have a winning strategy on $T$ with respect to $V_1$, which by Lemma~\ref{lem:bip} is equivalent to the main claim. For the sake of contradiction, suppose that there is a winning strategy with respect to $V_1$ for $k-1$ hunters, and denote it by $\mathcal{H}=(H_1,H_2,H_3,\ldots,H_m)$. Therefore, in the beginning of this strategy all the vertices of $V_1$ are contaminated, and at the end all the vertices of $T$ are clean. For $t=0,1,2,\ldots,m$, let $A_t$ be the set of contaminated vertices between hunters' shots $t$ and $t+1$. Thus, $A_0=V_1$, $A_m=\emptyset$, $A_t\subseteq V_1$ for even $t$, and $A_t\subseteq V_2$ for odd $t$.

Let us fix a moment $t\in \{0,1,\ldots,m\}$, and consider subtree $Q^{i,j}$. We shall say that
\begin{itemize}
\item $Q^{i,j}$ is {\em{contaminated at moment $t$}} if $Q^{i,j}\cap A_t\neq \emptyset$.
\item $Q^{i,j}$ is {\em{full at moment $t$}} if $Q^{i,j}\cap A_t=Q^{i,j}\cap V_1$ provided that $t$ is even, and $Q^{i,j}\cap A_t=Q^{i,j}\cap V_2$ provided that $t$ is odd.
\item $Q^{i,j}$ is {\em{well-contaminated at moment $t$}} if the following holds: Supposing $t'\leq t$ is the latest moment not later than $t$ when $Q^{i,j}$ was full, then $|Q^{i,j}\cap H_{t''}|<k-1$ for all $t'<t''\leq t$. In other words, since the last time $Q^{i,j}$ was full, it did not happen that {\em{all}} the available hunters were shooting at $Q^{i,j}$ in some round.
\end{itemize}
Observe that at moment $t=0$ all the subtrees $Q^{i,j}$ are full, hence the definition of being well-contaminated is valid. Observe also that by the induction hypothesis and Lemma~\ref{lem:bip}, each subtree $Q^{i,j}$ cannot be cleaned using less than $k-1$ hunters and beginning from any moment when it is full. This justifies the following claim.

\begin{claim}\label{cl:well-cont-cont}
If a subtree $Q^{i,j}$ is well-contaminated at moment $t$, then it is also contaminated at moment $t$.
\end{claim}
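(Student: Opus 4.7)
The plan is to localize the argument to the subtree $Q^{i,j}\cong T_{k-1}$ and invoke the inductive hypothesis $\hunt(T_{k-1})\ge k-1$. Let $t'\le t$ be the latest moment at which $Q^{i,j}$ was full. If $t'=t$ then fullness at $t$ already gives $Q^{i,j}\cap A_t\neq\emptyset$, so assume $t'<t$ and consider the \emph{projected} hunter strategy $(H'_1,\ldots,H'_{t-t'})$ on $Q^{i,j}$ defined by $H'_s=H_{t'+s}\cap Q^{i,j}$. By the well-contamination assumption $|H'_s|\le k-2$ for every $s$.

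First I apply the inductive hypothesis: $k-2$ hunters have no winning strategy on $T_{k-1}$, so by Lemma~\ref{lem:bip} they have no winning strategy on $Q^{i,j}$ with respect to whichever side of its bipartition matches the parity of $t'$. In particular $(H'_1,\ldots,H'_{t-t'})$ is not winning against that side, hence there exists a rabbit walk $r'_0,r'_1,\ldots,r'_{t-t'}$ lying entirely inside $Q^{i,j}$, with $r'_0$ in the correct parity class of $Q^{i,j}$ and $r'_{s-1}\notin H'_s$ for all $s$. Because $Q^{i,j}$ is an induced subtree of $T$, this walk is also a legal rabbit walk in $T$, and it evades the true shots $H_{t'+1},\ldots,H_t$: any such shot landing on $r'_{s-1}\in Q^{i,j}$ would already appear in the projection $H'_s$.

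Second, fullness of $Q^{i,j}$ at $t'$ gives a rabbit walk from some vertex of $V_1$ at time $0$ to $r'_0$ at time $t'$ that dodges $H_1,\ldots,H_{t'}$. Concatenating this initial walk with the interior walk of the previous paragraph produces a single rabbit strategy in $T$ that starts in $V_1$, avoids $H_1,\ldots,H_t$, and ends at $r'_{t-t'}\in Q^{i,j}$, witnessing $Q^{i,j}\cap A_t\neq\emptyset$. The only delicate point is the parity bookkeeping: the bipartition class of $Q^{i,j}$ in which $r'_0$ should live is determined by the parity of $t'$, and correspondingly $r'_{t-t'}$ lies in the other class; this is precisely the kind of parity shift that Lemma~\ref{lem:bip} lets us perform freely, so no real obstacle arises beyond this accounting.
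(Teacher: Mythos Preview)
Your argument is correct and is exactly the approach the paper has in mind: the paper justifies the claim in a single sentence (``by the induction hypothesis and Lemma~\ref{lem:bip}, each subtree $Q^{i,j}$ cannot be cleaned using less than $k-1$ hunters and beginning from any moment when it is full''), and you have simply unpacked that sentence into the projection-and-concatenation argument. One small inaccuracy: the phrase ``correspondingly $r'_{t-t'}$ lies in the other class'' is not right in general (it lies in the class matching the parity of $t$, which may or may not be the other class), but this is harmless since all you need is $r'_{t-t'}\in Q^{i,j}\cap A_t$, and the parities align automatically.
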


Similarly as for $Q^{i,j}$, a subtree $P^i$ is {\em{contaminated at moment $t$}} if $P^{i}\cap A_t\neq \emptyset$. Also, $P^i$ is {\em{full at moment $t$}} if $P^i\cap A_t=P^i\cap V_1$ provided $t$ is even, and $P^i\cap A_t=P^i\cap V_2$ provided $t$ is odd.

We now prove a few auxiliary observations that will be used in the main proof. 

\begin{claim}\label{cl:getting-full}
The following holds:
\begin{itemize}
\item Suppose that subtree $Q^{i,j}$ is contaminated at moment $t$ and for every $t'$ with $t<t'\leq t+(4k-6)$ we have that $H_{t'}\cap Q^{i,j}=\emptyset$. Then $Q^{i,j}$ is full at moment $t+(4k-6)$.
\item Suppose that subtree $P^{i}$ is contaminated at moment $t$ and for every $t'$ with $t<t'\leq t+(4k-4)$ we have that $H_{t'}\cap Q^{i,j}=\emptyset$. Then $P^{i}$ is full at moment $t+(4k-4)$.
\end{itemize}
\end{claim}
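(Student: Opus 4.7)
The plan is based on the observation that $4k-6$ and $4k-4$ are precisely the diameters of the subtrees $Q^{i,j}$ and $P^i$ inside $T$. The first step is a straightforward induction on $\ell$: the depth of $T_\ell$ from its root equals $2\ell-1$ (clear for $\ell=1$; the recursion adds two levels, $v^i$ and $w^{i,j}$, above the copies of $T_{\ell-1}$). Consequently $Q^{i,j}\cong T_{k-1}$ has depth $2k-3$ from its root $w^{i,j}$, and its diameter equals $4k-6$ (the longest path goes from a deepest leaf through $w^{i,j}$ down to a deepest leaf in another child subtree of $w^{i,j}$, which exists since $p(k-1)\geq 2$). Analogously, $P^i$ rooted at $v^i$ has depth $2k-2$, and its diameter is $4k-4$, realized by a path through $v^i$ connecting leaves of two distinct $Q^{i,j}$ branches.

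The heart of both parts is then a unified spreading argument. Let $S\in\{Q^{i,j},P^i\}$ with $D:=\mathrm{diam}(S)\in\{4k-6,4k-4\}$, and assume the corresponding hypothesis of the claim (the second part's condition should evidently read $H_{t'}\cap P^i=\emptyset$, rather than $H_{t'}\cap Q^{i,j}=\emptyset$ as written, which appears to be a typo). By contamination of $S$ at time $t$, some $v\in S$ lies in $A_t$; equivalently, there is a rabbit trajectory $(r_0,\ldots,r_t)$ with $r_t=v$ that has avoided all past shots. Now fix an arbitrary target $u\in S$ in the bipartition class of $A_{t+D}$; since $D$ is even, this is the same class as $v$, so the distance $r:=d_T(v,u)$ is even, and since $S$ is a connected subtree of $T$, the unique $v$--$u$ path in $T$ stays entirely inside $S$, giving $r\leq D$.

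I then extend the trajectory by walking the shortest path from $v$ to $u$ during rounds $t+1,\ldots,t+r$, and then oscillating between $u$ and any fixed neighbor $u'\in S$ of $u$ during the remaining rounds $t+r+1,\ldots,t+D$. Such a $u'$ always exists: the root of $S$ has children in $S$ (since $|V(S)|\geq |V(T_1)|=3$), and every non-root vertex of $S$ has its parent in $S$. Because $D-r$ is even, the oscillation places the rabbit at $u$ exactly at moment $t+D$; moreover every added move stays in $S$, which by hypothesis is untouched by any hunter during rounds $t+1,\ldots,t+D$, so the extended trajectory is a valid rabbit strategy. Therefore $u\in A_{t+D}$, and since $u$ was arbitrary within the appropriate bipartition class, $S$ is full at moment $t+D$, as required. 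The only delicate point is the existence of the oscillation neighbor $u'$, which is routine in our setup but would be the natural source of errors if the base tree were smaller.
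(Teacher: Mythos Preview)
Your proof is correct and follows the same approach as the paper: both arguments rest on the observation that the diameters of $Q^{i,j}$ and $P^i$ are $4k-6$ and $4k-4$ respectively (which the paper derives from $\mathrm{diam}(T_k)=4k-2$ by an induction equivalent to your depth computation). The paper simply states that the claim then ``follows immediately'' from these diameter facts, whereas you spell out explicitly the spreading-plus-oscillation argument that makes this immediate; your remark about the typo in the second bullet is also correct.
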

\begin{proof}
The claim follows immediately from the facts that the diameter of each $Q^{i,j}$ is equal to $4k-6$ and the diameter of each $P^{i}$ is equal to $4k-4$. This, in turn, follows from the observation that the diameter of $T_k$ is equal to $4k-2$, which can be proved via a straightforward induction.
\cqed\end{proof}

\newcommand{\cnt}[2]{n_{#1}(#2)}

For a moment $t$ ($0\leq t\leq m$) and index $i$ ($1\leq i\leq p$), let $\cnt{i}{t}$ be the number of subtrees $Q^{i,j}$, for $j=1,2,\ldots,p$, that are well-contaminated at moment $t$. We shall say that
\begin{itemize}
\item $P^i$ is {\em{lightly contaminated at moment $t$}} if $\cnt{i}{t}\leq p/2$;
\item $P^i$ is {\em{heavily contaminated at moment $t$}} if $p/2<\cnt{i}{t}$.
\end{itemize}

\begin{claim}\label{cl:root-contam}
Suppose $P^i$ is heavily contaminated at moment $t$, where $t$ is odd. Then $v^i\in A_t$.
\end{claim}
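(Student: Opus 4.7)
The plan is to prove the contrapositive: assuming $v^i \notin A_t$ for odd $t$, we derive that at most $k-1$ of the subtrees $Q^{i,j}$ can be well-contaminated at moment $t$, so $\cnt{i}{t} \leq k-1 < p/2$ and $P^i$ is lightly contaminated.

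First, since $t$ is odd we have $A_t \subseteq V_2$, $A_{t-1} \subseteq V_1$, and $v^i \in V_2$. Using the propagation rule $A_t = N(A_{t-1} \setminus H_t)$, the assumption $v^i \notin A_t$ is equivalent to: no neighbor of $v^i$ lies in $A_{t-1} \setminus H_t$. The neighbors of $v^i$ are $u$ and $w^{i,1}, \ldots, w^{i,p}$, all of which lie in $V_1$. Hence for every $j$, either $w^{i,j} \notin A_{t-1}$ or $w^{i,j} \in H_t$. Define $J_1 = \{j : w^{i,j} \in H_t\}$; since $|H_t| \leq k-1$ we get $|J_1| \leq k-1$.

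The core of the argument is the sub-claim: for every $j \notin J_1$ (so $w^{i,j} \notin A_{t-1}$ and $w^{i,j} \notin H_t$), the subtree $Q^{i,j}$ is \emph{not} well-contaminated at moment $t$. Granted this, the well-contaminated indices all lie in $J_1$, so $\cnt{i}{t} \leq |J_1| \leq k-1$, and the straightforward estimate $p/2 = (4k-3)(k-1)+1 > k-1$ (valid for every $k\geq 2$) finishes the proof.

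To prove the sub-claim I would argue by contradiction. Suppose $Q^{i,j}$ is well-contaminated at $t$ with $w^{i,j} \notin A_{t-1}$ and $w^{i,j} \notin H_t$; let $t_j \leq t$ be the last moment when $Q^{i,j}$ was full. By the definition of well-contamination, $|Q^{i,j} \cap H_{t''}| \leq k-2$ for every $t_j < t'' \leq t$. Since $Q^{i,j} \cong T_{k-1}$ and, by the induction hypothesis, $\hunt(T_{k-1}) \geq k-1$, Lemma~\ref{lem:bip} applied to the restricted subgame inside $Q^{i,j}$ guarantees that a rabbit confined to $Q^{i,j}$ from moment $t_j$ onwards cannot be eliminated by the at most $k-2$ hunters operating there. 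The goal is to promote this bare survival statement to a reachability statement: the confined rabbit can be steered so that $r_{t-1}=w^{i,j}$. Lifting such a confined strategy to a global rabbit strategy (starting at some vertex of $V_1\cap Q^{i,j}$) then exhibits $w^{i,j}\in A_{t-1}$, a contradiction with the assumption. The heuristic reason this reachability holds is that the branching parameter $p(k-1)$ of $T_{k-1}$ provides many parallel routes to the root, and a deficit of one hunter is not enough to block every even-parity approach toward $w^{i,j}$ simultaneously.

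\textbf{Main obstacle.} The hardest step is precisely the promotion of ``rabbit survives in $Q^{i,j}$ against $k-2$ hunters'' to ``rabbit can be placed at the root $w^{i,j}$ at time $t-1$.'' Mere survival, which follows immediately from $\hunt(T_{k-1})\geq k-1$ and Lemma~\ref{lem:bip}, does not a priori pin the rabbit to any prescribed vertex; one must exploit the parity of $t-1$, the non-shot status of $w^{i,j}$ at step $t$, and the rich branching of $T_{k-1}$ (so that at most $k-2$ hunters cannot simultaneously cut all the root-to-leaf paths inside $Q^{i,j}$). In practice this will be handled by a careful joint induction that tracks not just whether a subtree is contaminated, but whether its root is contaminated at the appropriate moments — essentially the same flavour of invariant that Claims~\ref{cl:well-cont-cont} and~\ref{cl:getting-full} already begin to formalise.
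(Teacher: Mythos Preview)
Your contrapositive reduction is clean and the counting in the first paragraph is correct, but the proposal stalls at exactly the point you flag as the ``main obstacle,'' and that gap is real rather than cosmetic. The sub-claim you need --- that a single well-contaminated subtree $Q^{i,j}$ (with $w^{i,j}\notin H_t$) must have $w^{i,j}\in A_{t-1}$ --- does not follow from the induction hypothesis $\hunt(T_{k-1})\geq k-1$ together with Lemma~\ref{lem:bip}. Those only yield that the rabbit \emph{survives somewhere} in $Q^{i,j}$ against $k-2$ hunters; they say nothing about steering it to the root at a prescribed parity. Your suggested fix (``careful joint induction tracking root-contamination'') would mean strengthening the global inductive statement beyond $\hunt(T_{k-1})\geq k-1$, and you have not specified what that stronger statement is or why it is inductively stable. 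As written, the argument is incomplete.

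The paper sidesteps this difficulty entirely and never tries to prove your sub-claim. Instead of working inside one well-contaminated subtree, it looks back over a time window of length $4k-5$ and splits into two cases. If $v^i$ was contaminated and not shot at some odd moment $t_0$ in that window, then every $w^{i,j}$ got contaminated at $t_0+1$, and a pigeonhole count (at most $(4k-5)(k-1)$ subtrees can receive any shot in the window, while there are $p>(4k-5)(k-1)$ subtrees) isolates one $Q^{i,j}$ that was entirely untouched, so its root stays contaminated at every even moment up to $t-1$. If no such $t_0$ exists, then every subtree contaminated at $t$ has been contaminated throughout the window; again pigeonhole (now using $p/2>(4k-5)(k-1)$, which is precisely why $p=p(k)$ was chosen so large) gives one subtree with \emph{zero} shots in the window, and Claim~\ref{cl:getting-full} makes it full at $t-1$. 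Either way one gets a specific $w^{i,j}\in A_{t-1}\setminus H_t$, hence $v^i\in A_t$. The point is that the paper needs only Claim~\ref{cl:well-cont-cont} (well-contaminated $\Rightarrow$ contaminated) and the diameter bound of Claim~\ref{cl:getting-full}; it never needs the survival-to-root promotion you are attempting.
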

\begin{proof}
Suppose first that there was a moment $t_0$ with $t-(4k-5)\leq t_0<t$, such that $v^i\in A_{t_0}$ but $v^i\notin H_{t_0+1}$; in particular $t_0$ is odd. Then it follows that $w^{i,j}\in A_{t_0+1}$ for every $j=1,2,\ldots,p$, so in particular all the subtrees $Q^{i,j}$ became contaminated at moment $t_0+1$. Out of these subtrees, at most $(4k-5)(k-1)$ might contain some vertex of $H_{t'}$ for any $t'$ with $t_0<t'\leq t$, which leaves at least one subtree $Q^{i,j}$ that did not contain any shots during all these moments. Since both $t_0$ and $t$ are odd and $Q^{i,j}$ consists of more than one vertex, we infer that $w^{i,j}$ remained contaminated at all the even moments between $t_0+1$ and $t-1$, so in particular $w^{i,j}\in A_{t-1}$. Since $w^{i,j}\notin H_t$, we have that $v^i\in A_t$.

Suppose now that no such moment $t_0$ exists. This means that all the subtrees $Q^{i,j}$ that are contaminated at moment $t$, needed to be contaminated at all moments $t'$ with $\max(t-(4k-5),0)\leq t'\leq t$: the only way a subtree $Q^{i,j}$ can become contaminated is by not shooting at $v^i$ when it is contaminated. At most $(4k-5)(k-1)$ of these subtrees might contain some vertex of $H_{t'}$ for any $t'$ with $\max(0,t-(4k-5))<t'\leq t$, which leaves at least one subtree $Q^{i,j}$ that did not contain any shots during all these moments. By Claim~\ref{cl:getting-full}, this subtree is full at moment $t-1$. Since $t$ is odd, this means that $w^{i,j}\in A_{t-1}$. As $w^{i,j}\notin H_t$, we again have that $v^i\in A_t$.
\cqed\end{proof}

\begin{claim}\label{cl:becomes-well-contam}
Suppose $t$ is an odd moment, $v^i\in A_t$, and $v^i\notin H_{t+1}$. Then $P^i$ is heavily contaminated at moment $t+(4k-5)$.
\end{claim}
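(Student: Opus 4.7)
The plan is to trace how contamination spreads outward from $v^i$ through the children $w^{i,j}$ into the subtrees $Q^{i,j}$ during the $4k-5$ rounds following moment $t$, and then to show that only a small fraction of these subtrees can be ``disturbed'' by hunters' shots, forcing most of them to become full and hence well-contaminated.

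First I would analyse moment $t+1$. Since $v^i\in A_t$ and $v^i\notin H_{t+1}$, the rabbit can jump from $v^i$ to every neighbour not shot at step $t+1$. As $|H_{t+1}|\le k-1$, at most $k-1$ of the children $w^{i,1},\ldots,w^{i,p}$ lie in $H_{t+1}$. Let $S\subseteq\{1,\ldots,p\}$ be the set of indices $j$ with $w^{i,j}\notin H_{t+1}$; then $|S|\ge p-(k-1)$, and for every $j\in S$ we have $w^{i,j}\in A_{t+1}$, so $Q^{i,j}$ is contaminated at moment $t+1$.

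Next I would bound the number of subtrees disturbed by hunters during the moments $t+2,\ldots,t+(4k-5)$. There are $4k-6$ such moments with at most $k-1$ shots each, giving a total of at most $(4k-6)(k-1)$ shots. Since the subtrees $Q^{i,j}$ are pairwise vertex-disjoint, at most $(4k-6)(k-1)$ distinct subtrees contain any shot during this interval. For each $j\in S$ whose subtree $Q^{i,j}$ is \emph{not} disturbed in this sense, Claim~\ref{cl:getting-full} applied with starting moment $t+1$ (noting that $(t+1)+(4k-6)=t+(4k-5)$) guarantees that $Q^{i,j}$ is full at moment $t+(4k-5)$. Being full trivially implies being well-contaminated, as the condition in the definition becomes vacuous when $t'=t+(4k-5)$. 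Hence the number of well-contaminated subtrees at moment $t+(4k-5)$ satisfies
\[
\cnt{i}{t+(4k-5)} \;\ge\; |S|-(4k-6)(k-1) \;\ge\; p-(k-1)-(4k-6)(k-1) \;=\; p-(k-1)(4k-5).
\]

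The final step is the arithmetic check that this quantity strictly exceeds $p/2$. A direct computation using $p=p(k)=2((4k-3)(k-1)+1)$ yields
\[
p-2(k-1)(4k-5) \;=\; 2(k-1)\bigl[(4k-3)-(4k-5)\bigr]+2 \;=\; 4(k-1)+2 \;>\; 0,
\]
so $p-(k-1)(4k-5)>p/2$, which means $P^i$ is heavily contaminated at moment $t+(4k-5)$, as required. The main obstacle is really only this counting: one has to see that the value of $p(k)$ was fixed precisely so that after losing up to $k-1$ subtrees at round $t+1$ and up to $(4k-6)(k-1)$ further subtrees to shots in the following $4k-6$ rounds, strictly more than $p/2$ contaminated subtrees still remain unharmed long enough for Claim~\ref{cl:getting-full} to refill them. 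Everything else is a direct invocation of previously established facts.
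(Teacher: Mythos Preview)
Your proposal is correct and follows essentially the same route as the paper's proof: spread contamination from $v^i$ to the roots $w^{i,j}$, discard the few subtrees touched by shots during the next $4k-6$ rounds, and apply Claim~\ref{cl:getting-full} to the rest.

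One minor inaccuracy worth noting: you write that the rabbit can jump from $v^i$ to every neighbour \emph{not shot at step $t+1$}, and then discard up to $k-1$ indices $j$ with $w^{i,j}\in H_{t+1}$. This is overly cautious. Recall that $A_{t+1}=N(A_t\setminus H_{t+1})$; since $v^i\in A_t\setminus H_{t+1}$, \emph{all} neighbours of $v^i$ lie in $A_{t+1}$, regardless of whether they themselves were shot at in round $t+1$ (the shot is taken before the rabbit moves, so a shot at $w^{i,j}$ does not prevent it from being re-contaminated immediately by the rabbit arriving from $v^i$). Hence all $p$ subtrees $Q^{i,j}$ are contaminated at moment $t+1$, as the paper states directly. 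Your weaker estimate $p-(k-1)(4k-5)>p/2$ still goes through, so the proof is valid as written; the paper simply uses the slightly tighter $p-(4k-6)(k-1)>p/2$.
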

\begin{proof}
By the assumption we have that $w^{i,j}\in A_{t+1}$ for every $j=1,2,\ldots,p$. At most $(4k-6)(k-1)$ subtrees $Q^{i,j}$ can contain some vertex of $H_{t'}$ for $t+1<t'\leq t+(4k-5)$. This leaves more than $p/2$ subtrees $Q^{i,j}$ that do not contain any shots during these moments. By Claim~\ref{cl:getting-full}, all these subtrees are full at moment $t+(4k-5)$, so in particular they are well-contaminated then.
\cqed\end{proof}

Finally, we introduce a similar classification for the whole tree $T$ as for subtrees $P^i$. We shall say that 
\begin{itemize}
\item $T$ is {\em{lightly contaminated at moment $t$}} if the number of heavily contaminated subtrees $P^i$ is at most $p/2$;
\item $T$ is {\em{heavily contaminated at moment $t$}} if the number of heavily contaminated subtrees $P^i$ is more than $p/2$.
\end{itemize}
The following two claims can be proved in exactly the same manner as Claims~\ref{cl:root-contam} and~\ref{cl:becomes-well-contam}, with the modification that we use the second point of Claim~\ref{cl:getting-full} instead of the first one.

\begin{claim}\label{cl:whole-root-contam}
Suppose $T$ is heavily contaminated at moment $t$, where $t$ is even. Then $u\in A_t$.
\end{claim}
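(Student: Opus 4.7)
The plan is to transplant the proof of Claim~\ref{cl:root-contam} one level up in the recursive structure of $T$: the root $u$ of $T$ plays the role of $v^i$, each subtree $P^i$ plays the role of $Q^{i,j}$, and each root $v^i$ plays the role of $w^{i,j}$. The notion of being \emph{heavily contaminated} replaces \emph{well-contaminated}, and the second bullet of Claim~\ref{cl:getting-full} (which gives fullness of $P^i$ after $4k-4$ idle rounds) replaces the first bullet. Correspondingly, the relevant look-back window becomes $[t-(4k-3),\,t]$ instead of $[t-(4k-5),\,t]$.

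I would split into two cases depending on whether there is a moment $t_0$ with $t-(4k-3)\le t_0<t$ such that $u\in A_{t_0}$ and $u\notin H_{t_0+1}$; any such $t_0$ is automatically even because $u\in V_1$. If such $t_0$ exists, then $v^i\in A_{t_0+1}$ for every $i$. Over the at most $4k-3$ rounds in $(t_0,t]$ the hunters fire at most $(4k-3)(k-1)$ shots, hence at most that many subtrees $P^i$ can contain any shot. Since $p>(4k-3)(k-1)$, some $P^i$ receives no shots in the window. Because no vertex of this $P^i$ is ever hit, the contamination of $v^i$ at the odd moment $t_0+1$ bounces between $v^i$ and its children $w^{i,1},\ldots,w^{i,p}$, so $v^i\in A_{t'}$ for every odd $t'\in[t_0+1,t-1]$. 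In particular $v^i\in A_{t-1}$ and $v^i\notin H_t$, which forces $u\in A_t$.

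If no such $t_0$ exists, then contamination cannot re-enter a fully clean subtree $P^i$ during the window, because the only way in from outside $P^i$ is through the edge $uv^i$, and this would require exactly a forbidden $t_0$. Hence every $P^i$ contaminated at moment $t$ must in fact be contaminated at all moments in the window. By hypothesis more than $p/2=(4k-3)(k-1)+1$ subtrees $P^i$ are heavily contaminated at $t$, hence by Claim~\ref{cl:well-cont-cont} contaminated at $t$. Subtracting the at most $(4k-3)(k-1)$ subtrees that contain any shot in the window leaves at least two subtrees $P^i$ that are contaminated throughout $[t-(4k-3),t]$ and are completely unhit during the window; pick any such $P^i$. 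By the second bullet of Claim~\ref{cl:getting-full} applied at moment $t-(4k-3)$, this $P^i$ is full at moment $t-1$, so in particular $v^i\in P^i\cap V_2\subseteq A_{t-1}$. Combined with $v^i\notin H_t$, this yields $u\in A_t$, as desired.

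The one thing to be careful about is the quantitative calibration: the window length $4k-3$ is chosen precisely so that the $4k-4$ idle rounds needed by the second bullet of Claim~\ref{cl:getting-full} fit inside $(t-(4k-3),t-1]$ while still reserving moment $t$ for the final propagation to $u$, and the value $p(k)=2((4k-3)(k-1)+1)$ is tuned so that after removing $(4k-3)(k-1)$ shot-hosting subtrees from a pool of more than $p/2$ heavily contaminated ones, at least one usable subtree still remains in both cases. No genuinely new idea beyond Claim~\ref{cl:root-contam} is needed; the proof is a direct transcription with these parameter substitutions, as the author already notes.
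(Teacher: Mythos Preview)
Your proposal is correct and follows exactly the approach the paper indicates: it is the proof of Claim~\ref{cl:root-contam} transplanted one level up, with the window length increased from $4k-5$ to $4k-3$ to accommodate the second bullet of Claim~\ref{cl:getting-full} (diameter $4k-4$ for $P^i$ rather than $4k-6$ for $Q^{i,j}$), and with ``heavily contaminated $P^i$'' supplying the pool of more than $p/2$ contaminated subtrees in place of the well-contaminated $Q^{i,j}$'s. The case split, the counting of shot-hosting subtrees, and the final propagation step all go through verbatim with these substitutions, which is precisely what the paper asserts.
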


\begin{claim}\label{cl:whole-becomes-well-contam}
Suppose $t$ is an even moment, $u\in A_t$, and $u\notin H_{t+1}$. Then $T$ is heavily contaminated at moment $t+(4k-3)$.
\end{claim}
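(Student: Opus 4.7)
My plan is to mirror the proof of Claim~\ref{cl:becomes-well-contam} one level up in the recursion, substituting the bound on the diameter of $P^i$ (which is $4k-4$) for the bound on the diameter of $Q^{i,j}$ (which is $4k-6$). First I would use the definition of $A_t$: the fact that $u\in A_t$ is witnessed by some rabbit trajectory ending at $u$ at moment $t$ without being shot. Since $u\notin H_{t+1}$, this trajectory can be extended by having the rabbit jump to any neighbor, so $v^i\in A_{t+1}$ for every $i\in\{1,2,\ldots,p\}$. In particular, every $P^i$ is contaminated at moment $t+1$.

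Next I would call a subtree $P^i$ \emph{disturbed} if $H_{t'}\cap P^i\neq \emptyset$ for some $t'$ with $t+1<t'\leq t+(4k-3)$. This window consists of $4k-4$ rounds, each using at most $k-1$ hunters, so the number of disturbed subtrees is at most $(4k-4)(k-1)$. Here the key arithmetic check is that, by the choice $p=p(k)=2((4k-3)(k-1)+1)$, we have $p-(4k-4)(k-1)>p/2$, which boils down to verifying $(4k-3)(k-1)+1>(4k-4)(k-1)$, i.e.\ $k>0$. Hence more than $p/2$ subtrees $P^i$ remain undisturbed throughout this window.

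For each such undisturbed $P^i$, which is contaminated at moment $t+1$ and receives no hunter shot in the next $4k-4$ rounds, the second point of Claim~\ref{cl:getting-full} applies directly and yields that $P^i$ is full at moment $t+1+(4k-4)=t+(4k-3)$. Now comes the small but crucial bridging observation: once $P^i$ is full, every $Q^{i,j}\subseteq P^i$ is also full at the same moment, and being full at a moment $t^\star$ immediately makes a subtree well-contaminated at $t^\star$ (taking $t'=t^\star$ in the definition, the range $t'<t''\leq t^\star$ is empty, so the condition is vacuous). Therefore $\cnt{i}{t+(4k-3)}=p>p/2$ for each such $P^i$, meaning it is heavily contaminated at that moment.

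Combining the two counts, more than $p/2$ of the subtrees $P^i$ are heavily contaminated at moment $t+(4k-3)$, which is exactly the definition of $T$ being heavily contaminated at that moment. The only step that requires genuine care is the arithmetic fixing the constant $p(k)$: one must confirm that the budget $2((4k-3)(k-1)+1)$ is simultaneously large enough to absorb $(4k-6)(k-1)$ disturbances at the $Q^{i,j}$-level (for Claim~\ref{cl:becomes-well-contam}) and $(4k-4)(k-1)$ disturbances at the $P^i$-level (here), leaving strictly more than half of the children untouched in both cases. Everything else is a verbatim transposition of the lower-level argument.
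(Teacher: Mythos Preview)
Your proof is correct and follows exactly the approach the paper indicates: it is the direct transposition of the proof of Claim~\ref{cl:becomes-well-contam} one level up, using the second point of Claim~\ref{cl:getting-full} (diameter $4k-4$ for $P^i$) in place of the first, together with the straightforward bridging observation that a full $P^i$ has all its $Q^{i,j}$ full and hence well-contaminated, so that $P^i$ is heavily contaminated. The arithmetic check on $p(k)$ is also correct.
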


We are finally ready to prove that $\hunt(T_k)\geq k$ by exposing that the existence of the hunters' strategy $(H_1,H_2,H_3,\ldots,H_m)$ leads to a contradiction. Let $t\in \{0,1,\ldots,m\}$ be the latest moment when $T$ was heavily contaminated. Since $T$ is heavily contaminated at moment $0$, and lightly contaminated at moment $m$, we infer that such $t$ exists and satisfies $t<m$. By the maximality of $t$, we have that $T$ is lightly contaminated at moment $t+1$. This means that some subtree $P^{i_0}$ ceased to be highly contaminated between moments $t$ and $t+1$, which in turn means that some subtree $Q^{i_0,j_0}$ ceased to be well-contamined between moments $t$ and $t+1$. By the definition of being well-contaminated, subtree $Q^{i_0,j_0}$ could have ceased to be well-contaminated only if all the $k-1$ hunters were shooting at it at moment $t+1$, i.e., we have that $H_{t+1}\subseteq Q^{i_0,j_0}$. This implies in particular that $H_{t+1}\cap \{u,v^1,v^2,\ldots,v^p\}=\emptyset$.

We now consider two cases depending on the parity of $t$.

Suppose first that $t$ is odd, so $A_t\subseteq V_2$. We have that more than $p/2$ subtrees $P^i$ are heavily contaminated at moment $t$, including the subtree $P^{i_0}$. By Claim~\ref{cl:root-contam}, for each of these subtrees $P^i$ we have that $v^i\in A_t$. Since no vertex $v^i$ is contained in $H_{t+1}$, we can apply Claim~\ref{cl:becomes-well-contam} and infer that every subtree $P^i$ that was heavily contaminated at moment $t$ is again heavily contaminated at moment $t+(4k-5)$. Hence, at moment $t+(4k-5)$ we again have more than $p/2$ heavily contaminated subtrees $P^i$, a contradiction with the maximality of $t$.

Suppose now that $t$ is even, so $A_t\subseteq V_1$. Since $T$ is heavily contaminated at moment $t$, by Claim~\ref{cl:whole-root-contam} we have that $u\in A_t$. Since $u\notin H_{t+1}$, by Claim~\ref{cl:whole-becomes-well-contam} we have that $T$ is again heavily contaminated at moment $t+(4k-3)$, which contradicts the maximality of $t$.

We have obtained a contradiction in both of the cases, so this completes the inductive proof that $\hunt(T_k)\geq k$. Thus Theorem~\ref{thm:trees-lb} is proven.

\section{Conclusions}\label{sec:conclusions}
We conclude with a few open questions. 
\begin{itemize}
\item We have shown that 
 the extremal value of the hunting number for $n$-vertex trees is between $\Omega(\log n/\log \log n)$ and $\Oh(\log n)$. Unfortunately, we are unable to close this gap, which constitutes the first natural question.
\item We leave the algorithmic aspects of the problem completely untouched. For example, graphs with $h(G)=1$ can be recognized in polynomial time due to characterization from \cite{BritnellW13}. However, we do not know if deciding whether $h(G)\leq 2$ can be done in polynomial time.
\item While it is natural to assume that the problem is NP-hard or even PSPACE-hard, we do not have a proof confirming such an assumption.
\item It would be interesting to see if the hunting number of a tree can be computed in polynomial time. 
\end{itemize}

\paragraph{Acknowledgement} The second author is grateful to  Sergiu Hart for interesting discussions on rabbit hunting 
during a   dinner in the Conference Centre ``De Werelt'' in Lunteren, the Netherlands. These discussions served as  the starting point  for our studies.

\def\cprime{$'$}


\begin{thebibliography}{10}

\bibitem{AdlerRSSV03}
{\sc M.~Adler, H.~R{\"{a}}cke, N.~Sivadasan, C.~Sohler, and B.~V{\"{o}}cking},
  {\em Randomized pursuit-evasion in graphs}, Combinatorics, Probability {\&}
  Computing, 12 (2003), pp.~225--244.

\bibitem{BonatoN11}
{\sc A.~Bonato and R.~J. Nowakowski}, {\em The game of cops and robbers on
  graphs}, vol.~61 of Student Mathematical Library, American Mathematical
  Society, Providence, RI, 2011.

\bibitem{Brass09}
{\sc P.~Brass, K.~D. Kim, H.-S. Na, and C.-S. Shin}, {\em Escaping offline
  searchers and isoperimetric theorems}, Comput. Geom., 42 (2009),
  pp.~119--126.

\bibitem{BritnellW13}
{\sc J.~R. Britnell and M.~Wildon}, {\em Finding a princess in a palace: a
  pursuit-evasion problem}, Electron. J. Combin., 20 (2013), pp.~Paper 25, 8.

\bibitem{FominT08}
{\sc F.~V. Fomin and D.~M. Thilikos}, {\em An annotated bibliography on
  guaranteed graph searching}, Theoret. Comput. Sci., 399 (2008), pp.~236--245.

\bibitem{KirousisP85-In}
{\sc L.~M. Kirousis and C.~H. Papadimitriou}, {\em Interval graphs and
  searching}, Discrete Math., 55 (1985), pp.~181--184.

\bibitem{KirousisP86-Se}
\leavevmode\vrule height 2pt depth -1.6pt width 23pt, {\em Searching and
  pebbling}, Theoret. Comput. Sci., 47 (1986), pp.~205--218.

\bibitem{NowakowskiW83}
{\sc R.~Nowakowski and P.~Winkler}, {\em Vertex-to-vertex pursuit in a graph},
  Discrete Math., 43 (1983), pp.~235--239.

\bibitem{Parsons78}
{\sc T.~D. Parsons}, {\em Pursuit-evasion in a graph}, in Theory and
  applications of graphs ({P}roc. {I}nternat. {C}onf., {W}estern {M}ich.
  {U}niv., {K}alamazoo, {M}ich., 1976), Springer, Berlin, 1978, pp.~426--441.
  Lecture Notes in Math., Vol. 642.

\bibitem{Petrov82}
{\sc N.~N. Petrov}, {\em A problem of pursuit in the absence of information on
  the pursued}, Differentsial\cprime nye Uravneniya, 18 (1982), pp.~1345--1352,
  1468.

\bibitem{Quilliot85}
{\sc A.~Quilliot}, {\em A short note about pursuit games played on a graph with
  a given genus.}, J. Comb. Theory, Ser. B, 38 (1985), pp.~89--92.

\bibitem{Tosic85}
{\sc R.~To{\v{s}}i{\'c}}, {\em Vertex-to-vertex search in a graph}, in Graph
  theory (Dubrovnik 1985), 1985, pp.~233--23.

\end{thebibliography}
\end{document}